\newcommand{\Gr}{Gr\"obner }
\def \pol{{\rm poly}}
\def\deg{{\mbox {\rm deg}}}
\def\LM{{\mathrm{LM}}}
\def\LC{{\mathrm{LC}}}
\def\LT{{\mathrm{LT}}}
\def\tail{{\mathrm{tail}}}
\def\poly{{\mathrm{poly}}}
\def \LT{{\rm LT}}
\def \LM{{\rm LM}}
\def \LC{{\rm LC}}
\def \lcm{{\rm lcm}}
\def\ri{\rangle}
\def\li{\langle}
\def\LM{{\mathrm{LM}}}
\def\LC{{\mathrm{LC}}}
\def\LT{{\mathrm{LT}}}
\def\L{{\mathcal{L}}}
\def\I{{\mathcal{I}}}
\def\T{{\mathcal{T}}}
\def\S{{\mathrm{Spoly}}}
\def\nf{{\rm{NF}}}
\def\poly{{\rm{poly}}}
\def\anc{{\rm{anc}}}
\def\nsig{{\rm{sig}}}
\def\sig{{\mathcal {S}}}
\newcommand{\lex}{\mathop{\mathrm{lex}}\nolimits}
\newcommand{\alex}{\mathop{\mathrm{alex}}\nolimits}
\newcommand{\degrevlex}{\mathop{\mathrm{degrevlex}}\nolimits}
\newenvironment{algorithm1}[1]
{
	\begin{center}
		{\bf Algorithm #1} \\
    \begin{tabular}{|p{130mm}|} \hline
}
{
	\\ \hline
	\end{tabular}
	\end{center}
}
\newenvironment{subalgorithm1}[1]
{
	\begin{center}
		{\bf Subalgorithm #1} \\
    \begin{tabular}{|p{130mm}|} \hline
}
{
	\\ \hline
	\end{tabular}
	\end{center}
}
\begin{document}
\begin{frontmatter}

\title{Involutive Bases Algorithm Incorporating F$_5$ Criterion}

\author{Vladimir P. Gerdt}
\address{Laboratory of Information Technologies\\Joint Institute for Nuclear Research, 141980 Dubna, Russia}
\ead{Gerdt@jinr.ru}
\ead[url]{http://compalg.jinr.ru/CAGroup/Gerdt/}

\author{Amir Hashemi}
\address{Department of Mathematical Sciences, \\Isfahan University of Technology, Isfahan, 84156-83111, Iran}
\ead{Amir.Hashemi@cc.iut.ac.ir}
\ead[url]{www.amirhashemi.iut.ac.ir}

\author{Benyamin M.-Alizadeh}
\address{Young Researchers and Elites club, \\Science and Research Branch, Islamic Azad University, Tehran, 461/15655, Iran}
\ead{B.Alizadeh@math.iut.ac.ir}

\begin{abstract}
Faug\`ere's F$_5$ algorithm \cite{F5} is the fastest known algorithm to compute Gr\"obner bases. It has a signature-based and an incremental structure that allow to apply the  F$_5$ criterion for deletion of unnecessary reductions. In this paper, we present an involutive completion algorithm which outputs a minimal involutive basis. Our completion algorithm has a nonincremental structure  and in addition to the involutive form of Buchberger's criteria it applies the F$_5$ criterion whenever this criterion is applicable in the course of completion to involution. In doing so, we use the G$^2$V form of the F$_5$ criterion developed by Gao, Guan and Volny IV \cite{Gao}. To compare the proposed algorithm, via a set of benchmarks, with the Gerdt-Blinkov involutive algorithm \cite{gerdt0} (which does not apply the F$_5$ criterion) we use implementations of both algorithms done on the same platform in {\tt Maple}.
\end{abstract}

\begin{keyword}
Involutive division, Involutive bases, Gr\"obner bases,  Buchberger's criteria, F$_5$ criterion, G$^2$V algorithm.
\end{keyword}

\end{frontmatter}
\section{Introduction}
The most universal algorithmic tool in commutative algebra and algebraic geometry is {\em Gr\"obner basis}. The notion of Gr\"obner basis was introduced and an algorithm for its construction was designed in 1965 by Buchberger in his PhD thesis \cite{Bruno1}. Later on \cite{Bruno2}, he discovered two criteria for detecting some unnecessary, and thus useless, reductions that made the Gr\"obner bases method practical for solving a wide class of problems in commutative  algebra, algebraic geometry and in many other research areas of science and engineering~(see, for example, \cite{Bruno3}). However, that  original Buchberger's algorithm turned out to be too time and space consuming for many polynomial systems of scientific and industrial interest. In 1983, Lazard \cite{Daniel83} proposed a new approach based on the linear algebra techniques  to compute Gr\"{o}bner bases. In 1988, Gebauer and M{\"o}ller \cite{gm} reformulated Buchberger's criteria in an efficient way. In 1999, Faug\`{e}re \cite{F4} designed the F$_4$ algorithm to compute Gr\"{o}bner bases. This algorithm stems from Lazard's approach \cite{Daniel83} and exploits fast linear algebra for manipulation with underlying sparse matrices. It has been efficiently implemented in {\tt Maple} and {\tt Magma}. In 2002, Faug\`{e}re designed F$_5$ algorithm, an  incremental algorithm, based on the F$_5$ criterion \cite{F5}. Ars and Hashemi \cite{Extension} proposed a non-incremental version of F$_5$ by defining a new ordering on the signatures that made F$_5$ independent on the order of input polynomials. A correlation between Buchberger's and Faug\`{e}re's  approaches and methods was carefully analysed by Mora \cite{Mora}. Recently Eder and Perry \cite{Eder} simplified some of the steps in F$_5$ by constructing the reduced Gr\"obner basis at each step of the algorithm. Their algorithm called F$_5$C has been implemented in {\tt Singular} and somewhat optimizes F$_5$. Then Gao, Guan and Volny IV in \cite{Gao} presented G$^2$V; a variant of the F$_5$ algorithm whose structure is simpler than that of F$_5$ and F$_5$C and moreover, according to the benchmarking done by the authors of G$^2$V, the last algorithm may be more efficient than the other two signature-based algorithms. We refer to \cite{sun1,sun2,eder2} and references therein for further information on signature-based algorithms.

Another theory which largely parallels the theory of Gr\"obner bases is the theory of {\em involutive bases}. This theory has its origin in the works of the French mathematician Janet. In the $20$s of the last century, he developed \cite{janet} a constructive approach to analysis of linear and certain quasi-linear systems of partial differential equations based on their completion to involution (see the recent book~\cite{seiler} and references therein). The Janet approach has been generalized to arbitrary polynomially non-linear differential systems by the American mathematician Thomas \cite{thomas}. Based on the involutive methods as they have been presented in Pommaret's book~\cite{pommaret}, Zharkov and Blinkov introduced in \cite{zharkov} the notion of {\em involutive polynomial bases}. The particular form of an involutive basis that they studied is nowadays called {\em Pommaret basis}~\cite{seiler}.

Gerdt and Blinkov \cite{gerdt0} proposed a more general concept of involutive bases for polynomial ideals and designed algorithmic methods to construct these bases. The basic underlying idea of the involutive approach to commutative algebra is to translate the methods originating from Janet's approach into polynomial ideal theory in order to provide an algorithmic method for construction of polynomial involutive bases by combining algorithmic ideas in the theory of Gr\"obner bases and constructive ideas in the theory of involutive differential systems. In doing so, Gerdt and Blinkov \cite{gerdt0} introduced the concept of {\em involutive monomial division}\footnote{Inspired by these results, Apel~\cite{Apel98} put forward a somewhat different concept of involutive division.} and established two criteria to avoid some useless involutive reductions. This led to a computational tool (see the web pages {\tt http://invo.jinr.ru} and {\tt http://wwwb.math.rwth-aachen.de/Janet/} ) which can be considered as an efficient alternative ({\tt http://cag.jinr.ru/wiki/Benchmarking\_for\_polynomial\_ideals}) to the conventional Buchberger's algorithm (note that any involutive basis is also a Gr\"obner basis). Apel and Hemmecke \cite{detecting} discovered that there are two more criteria for detecting unnecessary involutive reductions (see also \cite{gerdt2}) which, in the aggregate with the criteria by Gerdt and Blinkov \cite{gerdt0}, are equivalent to Buchberger's criteria. The first author decribed in \cite{gerdtnew} a computationally efficient algorithm for involutive and Gr\"obner bases using all these criteria. Different versions of involutive algorithms \cite{gerdtnew,gerdt0,zharkov} based on the concept of involutive division by Gerdt and Blinkov have been implemented in {\tt Reduce, Singular, Macaulay2, Maple} and very recently in {\tt CoCoA}. The fastest implementation is the one done in {\tt GINV} \cite{ginv}. For application of involutive bases to commutative algebra and to algebraic-geometric theory of partial differential equations we refer to Seiler's book \cite{seiler}.

The conventional and involutive full normal forms modulo an involutive basis are equal \cite{gerdt0}. Thereby, a natural question that arises is: {\em How to incorporate the F$_5$ criterion into an involutive algorithm?} In the given paper, we answer this question by proposing a new structure for the algorithm described in \cite{gerdtnew}. We shall refer to the last as the Gerdt-Blinkov involutive (GBI) algorithm. Our structure allows to exploit the F$_5$ criterion. For the sake of simplicity,  we use here the G$^2$V version of the criterion. Our new algorithm is not incremental since at each step we must consider the set of multiplicative and nonmultiplicative variables for the whole set of polynomials in the intermediate basis including the input ones (see Section \ref{Inc}). However, by using the signature characterization inherent in any version of the F$_5$ algorithm, we provide an F$_5$-consistent {\em involutive completion} of the input polynomial set. Such a completion applies the F$_5$ criterion as much as possible and ends up with an involutive basis. Then a minimal involutive basis is constructed from the last basis. We have implemented the new algorithm in {\tt Maple} for the Janet division \cite{gerdt0} and for the $\succ_{\alex}$-division \cite{alex}. In order to analyse the new algorithm experimentally, via some benchmarks, and to make its comparison with the GBI algorithm at the same implementation platform, we implemented the last algorithm in {\tt Maple} for both involutive divisions as well.

The paper is organized as follows. Section \ref{sec:1} contains some basic definitions and notations related to the theory of involutive bases, and the algorithm for construction of minimal such bases in its simplest form \cite{gerdtnew}. In Section \ref{F5algo} we present briefly the F$_5$ criterion and its G$^2$V modification. Section \ref{Inc} is devoted to the description of our new involutive algorithm for computing minimal involutive bases. At the end of this section, we give an illustrating example for the proposed algorithm. In Section \ref{Exp} we present our experimental comparison of the new algorithm with the GBI algorithm.

\section{Involutive bases}
\label{sec:1}

In this section, we recall some basics from the theory of involutive bases and briefly describe the algorithm for their construction in its simplest form \cite{gerdtnew}.

Let $K$ be a field and $R:=K[x_1,\ldots,x_n]$ be the polynomial ring in the variables $x_1,\ldots,x_n$ over $K$. Below, we denote a {\em monomial} $x_1^{\alpha_1}\cdots x_n^{\alpha_n}\in R$ by ${\bf x}^\alpha$ where $\alpha=(\alpha_1,\ldots,\alpha_n) \in \mathbb{N}^{n}$ is a sequence of non-negative integers. We shall use the notations $\deg_i({\bf x}^\alpha):=\alpha_i$ and  $\deg({\bf x}^\alpha):=\sum_i \alpha_i$. An {\em admissible} monomial ordering on $R$ is a total order $\prec$ on the set of all monomials such that\\[0.1cm]
  \hspace*{0.5cm} (i) $ 1\prec \mathbf{x}^\alpha\ \text{for all}\ \mathbf{x}^\alpha\neq 1$\,,\ \ (ii) $ \mathbf{x}^\alpha \prec \mathbf{x}^\beta\ \text{implies}\ \mathbf{x}^{\alpha+\gamma} \prec \mathbf{x}^{\beta+\gamma}\  \text{for all}\ \alpha,\beta,\gamma \in \mathbb{N}^{n}$\,.

\noindent
The typical examples of such monomial orderings denoted respectively by $\prec_{\lex}$  and $\prec_{\degrevlex}$ are {\em lexicographical} and {\em degree-reverse-lexicographical}. Given monomials ${\bf x}^\alpha$ and ${\bf x}^\beta$, ${\bf x}^\alpha \prec_{\lex} {\bf x}^\beta$ if the left-most non-zero entry of $\beta-\alpha$ is positive; ${\bf x}^\alpha \prec_{\degrevlex} {\bf x}^\beta$ if $\deg({\bf x^\alpha})>\deg({\bf x^\beta})$ or $\deg({\bf x^\alpha})=\deg({\bf x^\beta})$ and the right most non-zero entry of $\beta-\alpha$ is negative.

Let  ${\I}=\langle f_1,\ldots ,f_k\rangle$ be the {\em ideal} in $R$ generated by the polynomials $f_1,\ldots ,f_k\in R$. Furthermore, let $f\in R$ and $\prec$ be a monomial ordering on $R$. The {\em leading monomial} of $f$ is the greatest monomial (with respect to $\prec$) occurring in $f$, and we denote it by $\LM(f)$. Respectively, the {\em leading term} of $f$ is denoted by $\LT(f)$ and the {\em leading coefficient} by $\LC(f)$. If $F\subset R$ is a set of polynomials, we denote by $\LM(F)$ the set $\{\LM(f) \ \mid \ f\in F\}$. The {\em leading monomial ideal} of ${\I}$ is defined to be $\LM({\I})=\langle \LM(f)\ \mid \ f \in {\I}\rangle.$
A finite set $G\subset {\I}$ is called a {\em Gr\"obner basis} of ${\I}$ if $\LM({\I})=\langle \LM(G) \rangle$. 
We refer to the book by Becker and Weispfenning~\cite{Becker} for more details on Gr\"obner bases.

We recall below the definition of involutive bases. For this purpose, we describe first the notion of an {\em involutive division} \cite{gerdt0} which is a restricted monomial division \cite{gerdtnew} that, together with a monomial ordering, determines properties of an involutive basis. This makes the main difference between involutive bases and Gr\"obner bases. The idea is to partition the variables into two subsets of {\em multiplicative} and {\em nonmultiplicative} variables, and only the multiplicative variables can be used in the divisibility relation.

\begin{defn}(\cite{gerdt0})
An {\em involutive division} ${\L}$ on the set of monomials of $R$ is given, if  for any finite set $U$ of monomials and any $u \in U$, the set of variables is partitioned into the subsets of {\em multiplicative} $M_{\L}(u,U)$ and {\em nonmultiplicative} $NM_{\L}(u,U)$ variables such that the following three conditions hold:
\begin{itemize}
\item $u,v\in U,\ u\cdot {{\L}(u,U)} \cap v\cdot{{\L}(v,U)}\neq \emptyset$ implies $u\in v\cdot {{\L}(v,U)}$ or $v \in u\cdot {{\L}(u,U)}$,
\item $u,v\in U,\ v \in u\cdot{{\L}(u,U)}$ implies ${{\L}(v,U)} \subset {{\L}(u,U)}$,
\item $u \in V$ and $V \subset U$ implies ${{\L}(u,U)} \subset {{\L}(u,V)}$,
\end{itemize}
where ${\L}(u,U)$ denotes  the monoid generated by the variables in $M_{\L}(u,U)$. If $v \in u\cdot{\L}(u,U)$ then  $u$ is called ${\L}$-{\em (involutive) divisor} of $v$ and the involutive divisibility relation is denoted by $u \mid_{\L} v$.
If $v$ has no involutive divisors in a set $U$, then it is called {\em ${\L}$-irreducible} modulo $U$.
\label{invdiv}
\end{defn}

There are involutive divisions based on the classical partitions of variables suggested by Janet \cite{janet} and Thomas \cite{thomas}. In this paper, we are also concerned with the wide class \cite{alex} of involutive divisions determined by a total monomial ordering $\sqsupset$ such that it is either admissible or the inverse of an admissible ordering, and by a permutation $\sigma$ on the indices of variables:
\begin{equation}
 (\ \forall u\in U\ ) \ \ [\ NM_{\sqsupset}(u,U)=\bigcup_{v\in U\setminus \{u\}}NM_{\sqsupset}(u,\{u,v\})\ ]
 \label{pair}
\end{equation}
\begin{equation}
NM_{\sqsupset}(u,\{u,v\}):=\left\lbrace
\begin{array}{l}
\text{ if } u\sqsupset v \text{ or } (u\sqsubset v \wedge v\mid u) \text{ then } \emptyset \\[0.1cm]
\text{ else }\{x_{\sigma(i)}\},\ i=\min\{j\mid \deg_{\sigma(j)}(u)< \deg_{\sigma(j)}(v)\}\,.\\
\end{array}
\right. \label{inv_div}
\end{equation}

{\em Throughout the given paper, ${\L}$ is assumed to be either the division in this class, denoted by $\sqsupset$-division, or the Thomas division}. If we consider the monomial ordering $\sqsupset$ to be the lexicographical ordering, then (\ref{pair})-(\ref{inv_div}) generates the Janet division. As to the Thomas division (denoted by $\T$), it satisfies (\ref{pair}), but unlike (\ref{inv_div}) this division is not linked to a total monomial ordering. Instead,
\begin{equation}
NM_{\T}(u,\{u,v\}):=\{\,x \mid \deg_x(u)<\deg_x(v)\,\}\,.
\label{Thomas}
\end{equation}

\noindent
Now, we define an involutive basis.

\begin{defn}(\cite{gerdtnew})
Let ${\I}\subset R$ be an ideal, $\prec$ a monomial ordering on $R$ and ${\L}$ an involutive division. A finite set $G\subset {\I}$ is an {\em involutive basis} (or {\em ${\L}$-basis}) of ${\I}$ if for any $f\in {\I}$ there is $g\in G$ such that $\LM(g) \mid_{\L} \LM(f)$. If $U\subset R$ is a finite monomial set, then a monomial set $\bar{U}$ is called {\em ${\L}$-completion} of $U\subseteq \bar{U}$ if $\bar{U}$ is an ${\L}$-basis of $\langle U\rangle$.
\label{def_invbase}
\end{defn}

From this definition and from that of a Gr\"obner basis~\cite{Becker,Bruno1} it follows that an involutive basis is a Gr\"obner basis of the ideal that it generates, but the converse is not always true. {\em Noetherianity} of a division ${\L}$~\cite{gerdt0,alex} guarantees the existence of an ${\L}$-basis.

\begin{prop}{\em ({\cite{gerdt0}})}
\label{jan}
 Any ideal has a finite ${\L}$-basis.
\end{prop}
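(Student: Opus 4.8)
The plan is to reduce the proposition to a purely combinatorial statement about monomial ideals and then to settle that statement using the Noetherianity of $\L$. First I would invoke the classical existence theorem for Gr\"obner bases \cite{Becker}: the ideal $\I$ has a finite Gr\"obner basis $G$, so that $\LM(\I)=\langle \LM(G)\rangle$ is a finitely generated monomial ideal. Observe that every monomial $m\in\LM(\I)$ equals $\LM(f)$ for some $f\in\I$ (write $m=m'\cdot\LM(g)$ with $g\in G$ and take $f=m'g$). Consequently, by Definition \ref{def_invbase}, a finite set $\tilde G\subset\I$ is an $\L$-basis of $\I$ exactly when the monomial set $\LM(\tilde G)$ involutively covers $\LM(\I)$, i.e. when every monomial of $\LM(\I)$ has an $\L$-divisor in $\LM(\tilde G)$; note that this covering already forces $\LM(\tilde G)$ to generate $\LM(\I)$, so no separate generation hypothesis is needed. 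It therefore suffices to construct a \emph{finite} monomial set $\bar U\subset\LM(\I)$ that involutively generates the monomial ideal $\LM(\I)$; lifting each $u\in\bar U$ to a polynomial $g_u\in\I$ with $\LM(g_u)=u$ then yields the required $\tilde G=\{g_u\}$.

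To build $\bar U$ I would run the involutive completion of $U:=\LM(G)$. Repeatedly, for each $u$ in the current set $\bar U$ and each nonmultiplicative variable $x\in NM_{\L}(u,\bar U)$, I test whether the prolongation $x\cdot u$ has an $\L$-divisor in $\bar U$; if not, I adjoin $x\cdot u$ to $\bar U$. Since every adjoined monomial already lies in $\langle U\rangle=\LM(\I)$, the ordinary monomial ideal generated by $\bar U$ never changes. When the procedure halts, no nonmultiplicative prolongation falls outside the involutive cones of $\bar U$, and by the local criterion for involutivity -- whose proof rests on the three axioms of Definition \ref{invdiv} -- this \emph{local} completeness upgrades to the \emph{global} statement that every monomial of $\LM(\I)$ has an $\L$-divisor in $\bar U$. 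Thus a halting run delivers the desired finite involutive generating set.

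The crux, and the step I expect to be the main obstacle, is to show that this completion halts after finitely many adjunctions. Here I would combine Dickson's lemma with the Noetherianity of $\L$. If the process ran forever it would produce an infinite sequence $w_1,w_2,\dots$ of monomials in $\LM(\I)$; by Dickson's lemma only finitely many $w_i$ are minimal under ordinary divisibility, so from some index on each new $w_k$ is an ordinary multiple of an earlier $w_i$. The subtle gap is that ordinary divisibility is weaker than involutive divisibility, so this alone does not contradict the fact that $w_k$ was adjoined for lacking an $\L$-divisor. Closing this gap is precisely the role of Noetherianity: by definition a Noetherian division admits a finite involutive completion of every finite monomial set, which rules out such an infinite escape and forces termination.

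Finally, the standing hypotheses make this argument applicable, because both divisions in question are Noetherian: for the $\sqsupset$-division of (\ref{pair})--(\ref{inv_div}) (which includes the Janet division) Noetherianity is established in \cite{alex}, while for the Thomas division $\T$ of (\ref{Thomas}) it holds classically \cite{gerdt0}, the partition (\ref{Thomas}) depending only on the exponent vectors so that completion is controlled directly by Dickson's lemma. With Noetherianity of $\L$ in hand, the completion terminates, $\bar U$ is finite, and the lifted set $\tilde G$ is the sought finite $\L$-basis.
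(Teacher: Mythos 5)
Your overall skeleton---pass to $\LM(\I)$ via a finite Gr\"obner basis, complete the monomial set, and lift back to polynomials---is exactly the route behind the cited result: the paper itself offers no proof of Proposition \ref{jan}, attributing it to \cite{gerdt0} with the one-line remark that Noetherianity of $\L$ guarantees existence. Your reduction and your lifting step are correct, because Definition \ref{def_invbase} is purely a condition on leading monomials. However, the middle of your argument has two genuine gaps. First, the ``local criterion'' you invoke (if no nonmultiplicative prolongation escapes the involutive cones, then \emph{every} monomial of $\langle U\rangle$ has an $\L$-divisor in $\bar U$) does \emph{not} follow from the three axioms of Definition \ref{invdiv} alone; it requires the additional property of \emph{continuity} of the division, which is what \cite{gerdt0,alex} establish for the Thomas and $\sqsupset$-divisions. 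Second, and more seriously, your termination step is an assertion rather than a proof: Noetherianity says that \emph{some} finite completion exists, but involutive divisibility is relative to the reference set, so a monomial adjoined by your procedure (because it lacks an $\L$-divisor in the \emph{current} set) may perfectly well have an $\L$-divisor in that fixed finite completion. Hence the existence of a finite completion does not confine the monomials your procedure produces, and ``rules out such an infinite escape'' does not follow. Closing this in general needs constructivity of the division (so that the adjoined prolongations are forced to lie in every completion), or, for the divisions of this paper, the explicit device the authors themselves use in the termination part of Theorem \ref{main}: every $\T$-nonmultiplicative prolongation stays inside the finite set $\bar U_{\T}$ of (\ref{T-completion}), and by Proposition \ref{comparison} the $\sqsupset$-prolongations are among the $\T$-prolongations, so the procedure can adjoin at most $|\bar U_{\T}|$ monomials.

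There is also a simpler repair worth noting: with Noetherianity \emph{as you state it}, your completion procedure is superfluous. Noetherianity hands you directly a finite $\L$-completion $\bar U$ of $U:=\LM(G)$, i.e.\ a finite monomial $\L$-basis of $\langle U\rangle=\LM(\I)$ with $\bar U\subset\LM(\I)$; your lifting argument then produces the finite $\L$-basis of $\I$ at once. So the proposition and your first and last paragraphs are fine; it is precisely the procedural detour in between that introduces the two unproved steps.
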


\begin{rem}
By using an involutive division in polynomial rings, we obtain an {\em involutive division algorithm}~\cite{gerdt0}. Given a finite polynomial set $F$ and a monomial ordering, we denote by $\nf_{\L}(f,F)$ the remainder in ${\L}$-division of $f$ by $F$.
\end{rem}

For an involutive division ${\L}$, the following theorem provides an algorithmic characterization of involutive basis for a given ideal that is an involutive analogue of the Buchberger characterization of a Gr\"obner basis.

\begin{thm} {\em(\cite{gerdt0})}
\label{blin}
Let ${\I}\subset R$ be an ideal, $\prec$ a monomial ordering on $R$ and ${\L}$ an involutive division. Then a finite generating set $G \subset {\I}$ is an ${\L}$-basis of ${\I}$ if for each $f\in G$ and each $x\in NM_{\L}(\LM(f),\LM(G))$, we have $\nf_{\L}(x\cdot f,G)=0$.
\end{thm}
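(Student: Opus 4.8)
The plan is to verify the defining property directly: I must show that every $f\in\I$ admits some $g\in G$ with $\LM(g)\mid_{\L}\LM(f)$. Writing $U:=\LM(G)$ and introducing the \emph{involutive cone} $C(G):=\bigcup_{g\in G}\LM(g)\cdot\L(\LM(g),U)$ — the set of all monomials that are involutively reducible modulo $G$ — this amounts to proving $\LM(\I)\subseteq C(G)$. Since $G$ generates $\I$ and $\L$-divisibility implies ordinary divisibility, the inclusion $C(G)\subseteq\langle U\rangle\subseteq\LM(\I)$ is automatic, so all the work lies in the reverse direction.

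The heart of the argument is to show that the hypothesis forces $C(G)$ to be closed under multiplication by every variable. Indeed, the assumption $\nf_{\L}(x\cdot f,G)=0$ for each $f\in G$ and each $x\in NM_{\L}(\LM(f),U)$ entails in particular that the leading monomial $x\cdot\LM(f)$ of the prolongation is involutively reducible, i.e. lies in $C(G)$. I would then prove closure by Noetherian induction with respect to $\prec$ (well-founded on monomials). For $m=\LM(g)\,w\in C(G)$ with $w\in\L(\LM(g),U)$ and a variable $x$: if $x\in M_{\L}(\LM(g),U)$ then $xw\in\L(\LM(g),U)$ and $xm\in C(G)$ trivially; if $x\in NM_{\L}(\LM(g),U)$, the prolongation hypothesis places $x\cdot\LM(g)$ in some cone $\LM(g')\cdot\L(\LM(g'),U)$, and I transport the multiplicative tail $w$ across this change of divisor using the involutive-division axioms — the nested-or-disjoint-cones axiom and the filter axiom $\L(v,U)\subseteq\L(u,U)$ whenever $v\in u\cdot\L(u,U)$ — with the ordering $\prec$ supplying the strictly smaller monomials on which the induction hypothesis is applied.

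Granting closure, each $\LM(g)\in C(G)$ (take $w=1$), so $C(G)$ is exactly the set of monomials in $\langle U\rangle$; hence a monomial is involutively reducible modulo $G$ precisely when it is ordinarily divisible by some $\LM(g)$. Consequently the involutive normal form $h:=\nf_{\L}(f,G)\in\I$ of any $f$ can only stall with $\LM(h)\notin\langle U\rangle$. To exclude a nonzero such $h$ I would argue that $G$ is in fact a Gr\"obner basis: choosing a representation $f=\sum_i h_i g_i$ whose highest monomial $m:=\max_i\LM(h_i g_i)$ is minimal, either $\LM(f)=m$, whence $\LM(f)\in\langle U\rangle\subseteq C(G)$ and we are done, or $\LM(f)\prec m$ and the top terms cancel, in which case the cancelling combination is rewritten via the prolongations-reduce-to-$0$ hypothesis to produce a representation with strictly smaller $m$, contradicting minimality.

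I expect the two genuine obstacles to be, first, the bookkeeping in the closure step — transporting the tail $w$ into $\L(\LM(g'),U)$ after the divisor changes, which is exactly where all three axioms of Definition \ref{invdiv} must be invoked with care — and, second, the cancellation rewriting in the final step, the involutive counterpart of Buchberger's S-polynomial lemma. It is precisely here that the \emph{full} reduction of each prolongation to $0$, rather than mere involutive divisibility of its leading monomial, is indispensable.
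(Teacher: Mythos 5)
Your overall architecture --- first prove that $U:=\LM(G)$ is involutively complete (your closure of $C(G)$ under multiplication by variables), then run a Buchberger-style minimal-representation argument to conclude $\LM(\I)=\langle U\rangle$ --- is the same two-stage plan as the proof this paper relies on; note the paper itself gives no proof, Theorem~\ref{blin} being quoted from \cite{gerdt0}. The problem is that the step you yourself call the heart of the argument has a genuine hole. When $x\in NM_{\L}(\LM(g),U)$, local involutivity gives $x\cdot\LM(g)=\LM(g')\,v$ with $v\in\L(\LM(g'),U)$, and you must now place $x\cdot\LM(g)\,w=\LM(g')\,vw$ in some involutive cone. The filter axiom cannot transport $w$: as stated in Definition~\ref{invdiv} it applies only when \emph{both} monomials lie in $U$, i.e. it would need $\LM(g')\mid_{\L}\LM(g)$, whereas you only know $\LM(g')\mid_{\L}x\cdot\LM(g)$, and $x\cdot\LM(g)\notin U$ in general. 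So you are forced to iterate: pick a variable of $vw$ nonmultiplicative for $\LM(g')$, switch to a new divisor $\LM(g'')$, and so on. In this iteration nothing decreases with respect to $\prec$ --- the monomial $x\cdot\LM(g)\,w$ stays fixed throughout and only its factorization changes, while the successive divisors $\LM(g),\LM(g'),\LM(g''),\dots$ are not $\prec$-monotone. Your ``Noetherian induction with $\prec$ supplying strictly smaller monomials'' therefore has no well-founded measure attached to it.

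This is not a repairable bookkeeping issue inside your framework: the implication ``all nonmultiplicative prolongations $\L$-reduce to zero $\Rightarrow$ $\L$-basis'' is not a consequence of the three axioms of Definition~\ref{invdiv} alone. Gerdt and Blinkov introduce an additional property, \emph{continuity}, precisely to make it true: along any chain $u_1,u_2,\dots$ of elements of $U$ with $u_{i+1}\mid_{\L}u_i\cdot x_i$ for some $x_i\in NM_{\L}(u_i,U)$, the $u_i$ are pairwise distinct. Continuity is exactly what terminates the divisor-switching iteration above (the chain lives in the finite set $U$ and cannot repeat), and it holds for the divisions to which this paper restricts itself --- the Thomas and $\sqsupset$-divisions, Janet in particular \cite{gerdt0,alex}. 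So to fix your proof you must either invoke continuity of $\L$ or verify it for the divisions at hand; no argument from the axioms alone can close the gap. Your final Gr\"obner-basis step inherits the same issue: rewriting the cancelling combination into a representation with strictly smaller top monomial requires involutive standard representations of the products $t\cdot g$, and producing those runs into the identical divisor-switching termination question.
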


\begin{defn}(\cite{gerdt0})
An involutive basis $G$ is called {\em minimal} if for any other involutive basis $\tilde{G}$ of $\li G\ri $ the inclusion $\LM(G)\subseteq \LM(\tilde{G})$ holds. Similarly, a {\em minimal involutive completion} of a monomial set is a subset of any other involutive completion of this set.
\label{def_min}
\end{defn}
\noindent
A minimal involutive basis being monic and {\em ${\L}$-autoreduced}, i.e. satisfying
$$(\forall g\in G)\ [\,g=\nf_{\L}(g,G\setminus\{g\})\,]$$
is {\em unique} for a given ideal and a monomial ordering. Similarly, the minimal monomial involutive completion is uniquely defined.

\begin{prop}
For any $\sqsupset$-division and any finite set $U$ of monomials the following inclusion holds
\begin{equation}
(\,\forall u\in U\,)\ [\,NM_{\sqsupset}(u,U)\subseteq NM_{\T}(u,U)\,]\,,\quad \bar{U}_\sqsupset \subseteq \bar{U}_{\T}
\label{incl}
\end{equation}
where $\bar{U}_{\sqsupset}$ and $\bar{U}_{\T}$ denotes the minimal completion of $U$ for the $\sqsupset$-division and for the Thomas division, respectively.
\label{comparison}
\end{prop}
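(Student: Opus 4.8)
The plan is to prove the two inclusions of (\ref{incl}) in turn, obtaining the second as a consequence of the first together with the minimality of $\bar U_{\sqsupset}$.

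\emph{Variable inclusion.} First I would reduce to a pairwise statement. Both the $\sqsupset$-division and the Thomas division obey (\ref{pair}), so for each $u\in U$ one has $NM_{\sqsupset}(u,U)=\bigcup_{v}NM_{\sqsupset}(u,\{u,v\})$ and likewise for $\T$, the unions taken over $v\in U\setminus\{u\}$. Hence it suffices to show $NM_{\sqsupset}(u,\{u,v\})\subseteq NM_{\T}(u,\{u,v\})$ for a single pair and then take the union over $v$. I would then split according to (\ref{inv_div}). When $u\sqsupset v$, or $u\sqsubset v$ with $v\mid u$, the left-hand set is empty and nothing is to be proved. Otherwise $NM_{\sqsupset}(u,\{u,v\})=\{x_{\sigma(i)}\}$ with $i=\min\{j\mid\deg_{\sigma(j)}(u)<\deg_{\sigma(j)}(v)\}$; this minimum exists precisely because here $v\nmid u$, so some variable occurs in $v$ to strictly higher power than in $u$. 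By the defining inequality $\deg_{\sigma(i)}(u)<\deg_{\sigma(i)}(v)$, formula (\ref{Thomas}) puts $x_{\sigma(i)}$ into $NM_{\T}(u,\{u,v\})$, which finishes this part.

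\emph{Completion inclusion.} Next I would convert the variable inclusion into an inclusion of involutive cones. Passing to complements, $NM_{\sqsupset}(u,U)\subseteq NM_{\T}(u,U)$ gives $M_{\T}(u,U)\subseteq M_{\sqsupset}(u,U)$, hence $u\cdot\T(u,U)\subseteq u\cdot{\sqsupset}(u,U)$ for every $u$ and every finite monomial set $U$; consequently, when the multiplicative data are taken with respect to one and the same set, $w\mid_{\T}m$ implies $w\mid_{\sqsupset}m$. I would apply this with $U=\bar U_{\T}$ to show that $\bar U_{\T}$ is itself a $\sqsupset$-completion of $U$: it contains $U$ and generates the same monomial ideal $\langle U\rangle$ because it is a Thomas completion, and for every monomial $m\in\langle U\rangle$ Thomas completeness supplies $w\in\bar U_{\T}$ with $w\mid_{\T}m$, whence $w\mid_{\sqsupset}m$, so $\bar U_{\T}$ is a $\sqsupset$-basis of $\langle U\rangle$. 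Since $\bar U_{\sqsupset}$ is the \emph{minimal} $\sqsupset$-completion, Definition \ref{def_min} then yields $\bar U_{\sqsupset}\subseteq\bar U_{\T}$.

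The case analysis in the first part is routine; the step needing genuine care is the direction of the cone inclusion in the second. The naive reading, that fewer nonmultiplicative variables should give a smaller completion, runs the wrong way: fewer nonmultiplicative variables \emph{enlarge} each cone, so that a Thomas-complete set remains $\sqsupset$-complete, and it is only through minimality of $\bar U_{\sqsupset}$ that the containment is recovered. I expect this reversal, rather than any computation, to be the main obstacle.
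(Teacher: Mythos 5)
Your proof is correct and follows essentially the same route as the paper's (very terse) proof: the variable inclusion by reducing via (\ref{pair}) to the pairwise comparison of (\ref{inv_div}) with (\ref{Thomas}), and the completion inclusion as a consequence of the first. The mechanism you spell out for the second part --- that the cone inclusion goes the \emph{opposite} way, so that $\bar U_{\T}$ is itself a $\sqsupset$-completion of $U$ and only the minimality of $\bar U_{\sqsupset}$ (Definition \ref{def_min}) yields $\bar U_{\sqsupset}\subseteq\bar U_{\T}$ --- is precisely what the paper's phrase ``immediate consequence of the first one'' leaves implicit.
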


\begin{pf}
The first inclusion (for nonmultiplicative variables) is an obvious consequence of (\ref{inv_div}) and (\ref{Thomas}). The second inclusion is an immediate consequence of the first one.
\end{pf}

\begin{rem}
The minimal Thomas completion $\bar{U}_{\T}$ of a monomial set $U$ is given~\cite{gerdt0} by
\begin{equation}
\bar{U}_{\T}=\{\,u\in \langle U\rangle \mid deg_i(u)\leq \max\{\,\deg_i(v)\mid v\in U\,\}\,,\  i=1,\ldots,n\,\}\,.
\label{T-completion}
\end{equation}

\end{rem}

\noindent
Based on Theorem \ref{blin}, one can design an algorithm to compute involutive bases. We recall here the {\sc InvBas} ({\sc Inv}olutive{\sc Bas}is) algorithm  from \cite{gerdtnew}. The algorithm outputs a {\em minimal} involutive basis.

As it is emphasized in \cite{gerdtnew}, in comparison to the algorithm of the second paper in \cite{gerdt0}, another selection strategy is used in {\sc InvBas} that optimizes the displacement done in the {\bf for}-loop (lines 8-11). By this strategy, a polynomial is chosen in line 5 whose leading monomial has no proper divisors among the leading monomials in $Q$. However, the below  version of the GBI algorithm is still not efficient in practice, since it repeatedly processes the same prolongations and does not apply any criterion to avoid superfluous reductions.

\vskip 0.2cm
\begin{algorithm1}{{\sc InvBas}
\label{invbas}}
\begin{algorithmic}[1]
\INPUT $F$, a set of polynomials; ${\L}$, an involutive division; $\prec$, a monomial ordering
\OUTPUT a minimal involutive basis of $\li F \ri$
\STATE Select $f \in F$ with no proper divisor of $\LM(f)$ in $\LM(F)$;
\STATE $G:=\{f\}$;
\STATE $Q:=F \setminus G$;
\WHILE {$Q \ne \emptyset$}
     \STATE Select and remove $p \in Q$ with no proper divisor of $\LM(p)$ in $\LM(Q)$;
     \STATE $h:=\nf_{\L}(p,G)$;
     \IF {$h\ne 0$}
           \FOR {$g\in G$ such that $\LM(g)$ is properly divisible by $\LM(h)$}
                  \STATE {$Q:=Q\cup \{g\}$;}
                  \STATE $G:=G \setminus \{g\}$;
           \ENDFOR
           \STATE {$G:=G\cup \{h\}$;}
           \STATE {$Q:=Q\cup \{x\cdot g \ |\ g\in G, x\in NM_{\L}(\LM(g),\LM(G))\}$;}
     \ENDIF
\ENDWHILE
\RETURN ($G$)
\end{algorithmic}
\end{algorithm1}
\vskip 0.2cm

The improved version of the algorithm which is also presented in \cite{gerdtnew} clears away the repeated processing of prolongations and applies the involutive form of Buchberger's criteria. Below, we suggest one more algorithmic improvement which, in addition to the indicated ones, admits application of the F$_5$ criterion for deletion of useless reductions.

\section{F$_5$ criterion and G$^2$V algorithm}
\label{F5algo}

This section presents the theory behind the F$_5$ algorithm. After recalling some notations and definitions, we state the main theorem of \cite{F5} which is the cornerstone of the F$_5$ algorithm.  To this end, we consider the recent paper \cite{eder2} due to Eder and Perry. Finally, we present briefly the G$^2$V algorithm \cite{Gao} further developed in \cite{Gao2}.

Let ${\I}=\li f_1,\ldots ,f_k \ri\subset R=K[x_1,\ldots,x_n]$ be the ideal in $R$ generated by the polynomials $f_1,\ldots ,f_k$,
$R^k$ be a free $R$-module of rank $k$ and ${\mathbf{e}_1},\ldots ,{\mathbf{e}_k}$ be its canonical basis. For the sake of simplicity assume that each $f_i$ is monic, i.e. $\LC(f_i)=1$.
A {\em module monomial} is an element in $R^k$ of the form $m{\mathbf{e}_i}$ where $m\in R$ is a monomial.
Let us denote the set of all module monomials by $\mathbf{M}$.
A monomial ordering $\prec$ on $R$ can be extended to a {\em module  monomial ordering} on $\mathbf{M}$, denoted by $<$, as follows
$$\sum^{j}_{i=1}{g_i\cdot {\mathbf{e}_i}}< \sum^{\ell}_{i=1}{h_i\cdot {\mathbf{e}_i}} \ \ {\rm if} \  \  \left\{\begin{array}{lcl}
j>\ell &{\rm and}&h_\ell \ne 0 \ {\rm or} \\[-0.2cm]
j=\ell & {\rm and} & \LM(g_j)\prec \LM(h_j),\ g_j\cdot h_j\neq 0.
\end{array}\right. $$


\begin{defn}
Let $f\in \I$ and $1\leq j\leq k$ be the smallest integer for which $h_j\ne 0$ in $f=\sum_{i=1}^kh_if_i$. Then, the module monomial $\LM(h_j)\,{\mathbf{e}_j}$ is called a {\em natural signature} of $f$. Under the assumption, $\LM(h_j){\mathbf{e}_j}$ is the greatest module monomial w.r.t. $<$ occurring  in $\sum_{i=1}^kh_i{\mathbf{e}_i} \in R^k$. Denote by $\nsig(f)$ the set of all natural signatures of $f$. As one can easily see, $<$ is a well-ordering on $\mathbf{M}$ and thus $\nsig(f)$ has a unique  minimal element, which is called the {\em minimal natural signature} of $f$.
\end{defn}

Let $f\in R$ and  $m{\mathbf{e}_i} = \nsig(f)$. Then the pair $(m{\mathbf{e}_i}, f)\in R^k\times R$ is called a {\em labelled polynomial} associated to $f$. We shall denote the set of all labelled polynomials by ${\bf L}$, and from this point on we shall write $\sig(f)$ for the minimal signature associated to $f$ by the signature based algorithm under consideration.

\begin{defn}
Let $r = (m{\mathbf{e}_i}, f)\in {\bf L}$. Then $f$, $m{\mathbf{e}_i}$ and  $i$ are, respectively, called  the {\em polynomial part, signature} and {\em index} of $r$ and we denote them by $\poly(r)$, $\sig(r)$ and  index$(r)$. We define also $\LM(r), \LC(r)$ and $\LT(r)$ as $\LM(f), \LC(f)$ and $ \LT(f)$, respectively. Furthermore, the labelled polynomial $r$ can be multiplied by a monomial $u$ and by an element $c\in K$ of the ground field in accordance to the rules $ur = (u m {\mathbf{e}_i}, uf)$ and $cr = (m {\mathbf{e}_i}, cf)$.
\label{multrules}
\end{defn}
Denote by $\psi$ the following map:
$$
\begin{array}{cccl}
  \psi:& R^k &\rightarrow &R\\[-0.2cm]
&(g_1,\ldots, g_k)& \mapsto& g_1f_1 + \cdots + g_kf_k.
\end{array}
$$
A labelled polynomial $r = (\sig(r), \pol(r))$ is called {\em admissible} if there exists ${\bf g} \in R^k$ such that  $\psi({\bf g}) = \pol(r)$ and  the greatest module monomial w.r.t. $<$, occurring  in ${\bf g}$ is $\sig(r)$. It is easy to check that with the above notations, $um{\mathbf{e}_i} \in \nsig(uf)$, and hence the multiplication rules in Definition~\ref{multrules} preserve the admissibility.
Let us explain now the reduction algorithm used in ${\bf L}$. In this case, the reduction               is more restrictive than the usual polynomial reduction. If $(m{\mathbf{e}_i},f)$ is reducible by  $(m'{\mathbf{e}_j},g)$; i.e. $t\LM(g)=\LM(f)$ for a monomial $t\in R$, then one of the following cases holds.
\begin{itemize}
\item (safe reduction):  If $tm'{\mathbf{e}_j} < m{\mathbf{e}_i}$, then the reduction is performed as $(m{\mathbf{e}_i},f-tg)$.
\item (unsafe reduction): If $tm'{\mathbf{e}_j} \geq m{\mathbf{e}_i}$, then the signature is changed at the reduction step, and such a reduction is not performed.
\end{itemize}
This reduction algorithm provides an alternative definition (cf. ~\cite{Becker}, Definition 5.59) of standard representation for labelled polynomials.
\begin{defn}
Let $P\subset {\bf L}$ be a finite set, and $r,t\in {\bf L}$ with $f:=\poly(r)\ne 0$. We say that $r$ has a {\em $t$-representation} w.r.t. $P$ if $f=\sum_{p_i\in P}{h_i\poly(p_i)}$ $(h_i\in {cR})$ and for all $p_i\in P$ with $\poly(p_i)\ne 0$ the following relations hold:
$$ \LM(h_i) \LM(p_i)\preceq \LM(t) \ \ {\rm and}  \ \  \LM(h_i) \sig(p_i)\le \sig(r).$$
This property is written as $r=\mathcal{O}_P(t)$. We shall also write $r=o_P(t)$ if there exists a labelled polynomial $t'\in A$ satisfying $ \sig(t')\le \sig(t)$ and $\LM(t')\prec \LM(t)$ such that  $r=\mathcal{O}_P(t')$. A $t$-representation of $r$ is called {\em standard}  if $\LM(t)=\LM(r)$.
\end{defn}
\noindent

To state a Buchberger-like criterion  for labelled polynomials (Theorem \ref{thm: sr for L}),  we need to define the $S$-polynomial of two labelled polynomials.  Suppose that $f,g\in R$ are two polynomials. The conventional {\em $S$-polynomial} of $f$ and $g$ is defined to be
$$\S(f,g)=\frac{\lcm(\LM(f),\LM(g))}{\LT(f)}f-\frac{\lcm(\LM(f),\LM(g))}{\LT(g)}g\,.$$
Let now $r=(\sig(r),f)$ and $s=(\sig(s),g)$ be two admissible labelled polynomials with $u=\lcm(\LM(f),\LM(g))/\LM(f),v=\lcm(\LM(f),\LM(g))/\LM(g)$. If $v\sig(s)< u\sig(r)$, we define the {\em labelled $S$-polynomial} of $r$ and $s$ to be $\S(r,s)=(u\sig(r),\S(f,g))$. Otherwise, we do not consider such an $S$-polynomial, see \cite{eder2} for more details.

\begin{thm}{\em (\cite{eder2})}
\label{thm: sr for L}
Let  ${\I}=\li f_1,\ldots ,f_k \ri\subset R$ and let $G\subset {\bf L}$ be a finite set of admissible labelled polynomials such that
\begin{itemize}
\item for all $1\leq i\leq k$ there exists $r_i\in G$ such that $f_i=\poly(r_i)$,
\item for each pair $(r_i,r_j)\in G\times G$ either $u_i\sig(r_i)=u_j\sig(r_j)$, $\S(r_i,r_j)=0$ or $\S(r_i,r_j)=o_G(u_s\cdot r_s)$ where $u_s={\rm lcm}(\LM(r_i),\LM(r_j))/\LM(r_s)$ for $s\in \{i,j\}$.
\end{itemize}
Then the set $\{\poly(r)\ | \ r \in G\}$ is a \Gr basis of ${\I}$.
\end{thm}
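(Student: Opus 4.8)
The plan is to prove this by establishing that the signature-based $S$-polynomial conditions force every $S$-polynomial of the underlying polynomials $\poly(r_i)$ to reduce to zero with respect to $\{\poly(r)\mid r\in G\}$, thereby invoking the classical Buchberger criterion. First I would recall that by the first hypothesis the set $\{\poly(r)\mid r\in G\}$ generates $\I$, so it suffices to verify the Buchberger $S$-polynomial condition: for every pair $(r_i,r_j)$ the conventional $\S(\poly(r_i),\poly(r_j))$ admits a standard representation in terms of the polynomial parts of $G$. The idea is to translate each of the three cases in the second hypothesis into such a standard representation.

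\medskip

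\noindent\textbf{Case analysis.} The case $\S(r_i,r_j)=0$ is immediate, since then $\S(\poly(r_i),\poly(r_j))=0$ trivially has the required representation. The case $u_i\sig(r_i)=u_j\sig(r_j)$ is where the signature machinery does genuine work: when the two signatures of the cofactor-scaled generators coincide, the labelled $S$-polynomial is not formed (by the convention preceding the theorem, one forms $\S(r,s)$ only when $v\sig(s)<u\sig(r)$, i.e. when the signatures are comparable and distinct), and I would argue that the equality of signatures means the $S$-polynomial is a so-called syzygy-type reduction whose leading signature cancels; such a pair is exactly what the F$_5$/G$^2$V criterion discards, and one shows its polynomial $S$-polynomial is already reducible by lower-signature elements. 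The substantive case is $\S(r_i,r_j)=o_G(u_s\cdot r_s)$: here I would unfold the definition of $o_G$, extracting a labelled polynomial $t'$ with $\sig(t')\le\sig(u_s r_s)$ and $\LM(t')\prec\LM(u_s r_s)$ such that $\S(r_i,r_j)=\mathcal{O}_G(t')$, and then read off from the definition of a $t'$-representation that $\S(\poly(r_i),\poly(r_j))=\sum_{p\in G}h_p\poly(p)$ with $\LM(h_p)\LM(p)\preceq\LM(t')\prec\lcm(\LM(r_i),\LM(r_j))$. That strict inequality on leading monomials is precisely the standard-representation bound required by the Buchberger criterion.

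\medskip

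\noindent I would then assemble these three cases: in every case the conventional $S$-polynomial $\S(\poly(r_i),\poly(r_j))$ reduces to zero modulo $\{\poly(r)\mid r\in G\}$, equivalently has a standard representation with leading-monomial strictly below the least common multiple. Applying the standard Buchberger characterization of a \Gr basis (as in \cite{Becker}, the equivalence between the $S$-polynomial criterion and the standard-representation criterion, Definition~5.59 cited above) yields that $\{\poly(r)\mid r\in G\}$ is a \Gr basis of $\I$, completing the argument.

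\medskip

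\noindent The main obstacle I anticipate is the bookkeeping in the third case, specifically verifying that the signature bound $\LM(h_p)\sig(p)\le\sig(r)$ inherent in the $\mathcal{O}_G$-representation is compatible with translating from the labelled $S$-polynomial back to the conventional one without an uncontrolled increase in the polynomial degree or a change of signature during an unsafe reduction. Care is needed to ensure that the cofactors $h_p$ produced by the signature-restricted reduction can be reinterpreted as cofactors for the ordinary Buchberger criterion; this amounts to checking that admissibility of the labelled polynomials (the existence of ${\bf g}\in R^k$ realizing the signature under $\psi$) is preserved throughout, which is exactly the content of the remark that the multiplication rules in Definition~\ref{multrules} preserve admissibility. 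The remaining steps are essentially routine manipulations of the orderings $\prec$ and $<$.
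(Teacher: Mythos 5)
The paper itself contains no proof of this theorem --- it is quoted, with attribution, from Eder and Perry \cite{eder2} --- so your proposal can only be measured against that reference and on its own merits. Two of your three cases are handled correctly: $\S(r_i,r_j)=0$ is trivial, and in the case $\S(r_i,r_j)=o_G(u_s\cdot r_s)$ unfolding the definitions does produce a representation $\S(\poly(r_i),\poly(r_j))=\sum_{p\in G}h_p\,\poly(p)$ with $\LM(h_p)\LM(\poly(p))\preceq\LM(t')\prec\LM(u_s\cdot r_s)=\lcm(\LM(r_i),\LM(r_j))$, which is exactly the below-the-lcm ($t$-representation) form of Buchberger's criterion, so that case is a genuinely local, routine verification as you claim.

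The gap is the case $u_i\sig(r_i)=u_j\sig(r_j)$, which is precisely the case on which the whole theorem turns. The hypothesis imposes \emph{no} representation condition on such a pair, yet you dispose of it by saying that "such a pair is exactly what the F$_5$/G$^2$V criterion discards, and one shows its polynomial $S$-polynomial is already reducible by lower-signature elements." That is circular: the assertion that signature-equal pairs may be safely discarded \emph{is} the essential content of the theorem being proved, so it cannot be invoked as a known fact, and "reducible by lower-signature elements" is not by itself a standard representation. Handling this case forces you to abandon the local, pair-by-pair structure of your argument: one must argue globally, by well-founded induction on the module ordering $<$ (the paper notes $<$ is a well-ordering), or equivalently by a minimal-counterexample argument on signatures. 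When $u_i\sig(r_i)=u_j\sig(r_j)$, the module representations realizing the two signatures cancel in their top terms, so $\S(\poly(r_i),\poly(r_j))$ admits a module representation of strictly smaller signature; one then rewrites that representation Buchberger-style, and every cancellation that appears is governed by the hypotheses applied to \emph{other} pairs of smaller signature, the well-ordering guaranteeing that this recursion terminates in a standard representation. This induction-on-signature machinery is the heart of the Eder--Perry proof and is entirely absent from your proposal; until it is supplied, the one case where, as you yourself put it, "the signature machinery does genuine work" remains unproven, and Buchberger's criterion cannot be applied at all, since it requires a representation for every pair.
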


All algorithms to compute a Gr\"obner basis which take ``signatures'' into account  are called {\em signature-based} algorithms, see \cite{eder2}. Most of these algorithms, like Faug\`ere's F$_5$ algorithm \cite{F5} are described {\em incrementally} to apply the F$_5$ criterion. The F$_5$ algorithm computes sequentially the Gr\"obner bases of the ideals
$$\li f_k\ri,\li f_{k-1},f_{k}\ri,\ldots ,\li f_1,\ldots ,f_k\ri.$$
\begin{defn}
Let  ${\I}=\li f_1,\ldots ,f_k \ri\subset R$. An admissible labelled polynomial $(m{\mathbf{e}_i},f)$ is called {\em normalized}, if $m\notin \LM(\li f_{i+1},\ldots ,f_{k}\ri)$. A pair $(r_1,r_2)$ of admissible labelled polynomials is {\em normalized} if $u_ir_i$ and $u_jr_j$ are normalized where $u_s={\rm lcm}(\LM(r_i),\LM(r_j))/\LM(r_s)$ for $s\in \{i,j\}$.
\end{defn}
\begin{thm}{\em (F$_5$ criterion)}
By the assumptions of Theorem \ref{thm: sr for L}, the $S$-polynomial of a non-normalized pair $(r_i,r_j)\in G\times G$ has a standard representation w.r.t. $G$, and therefore, it is superfluous.
\end{thm}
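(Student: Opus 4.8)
The plan is to reduce to a single non-normalized prolongation, exhibit a principal (Koszul) syzygy whose leading module monomial coincides with the signature of the $S$-polynomial, and then use this syzygy to lower the signature of an admissible representation of $\S(r_i,r_j)$, so that by induction on the well-ordering $<$ it acquires a standard representation.

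First I would fix notation. Since the pair $(r_i,r_j)$ is non-normalized, at least one of the prolongations $u_ir_i$, $u_jr_j$ is non-normalized; without loss of generality assume it is $u_ir_i$, and write $a=\mathrm{index}(r_i)$ and $\sig(r_i)=m\,\mathbf{e}_a$. Because $v\sig(r_j)<u_i\sig(r_i)$ is required for $\S(r_i,r_j)$ to be defined, the signature of the $S$-polynomial is $T:=u_im\,\mathbf{e}_a$. Non-normalization of $u_ir_i$ means precisely that $u_im\in\LM(\li f_{a+1},\ldots,f_k\ri)$, so there exist an index $\ell>a$ and a monomial $w$ with $u_im=w\,\LM(f_\ell)$.

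Next I would produce the syzygy. Consider $\mathbf{z}:=w(f_\ell\mathbf{e}_a-f_a\mathbf{e}_\ell)\in R^k$. Since $\psi(\mathbf{z})=w(f_\ell f_a-f_af_\ell)=0$, it is a syzygy. As $a<\ell$, the module ordering $<$ (position over term, with $\mathbf{e}_1$ largest) makes $w\,\LM(f_\ell)\mathbf{e}_a$ dominate $w\,\LM(f_a)\mathbf{e}_\ell$, so the leading module monomial of $\mathbf{z}$ is $w\,\LM(f_\ell)\mathbf{e}_a=u_im\,\mathbf{e}_a=T$, which is exactly the signature of $\S(r_i,r_j)$. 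The core step is then the signature drop. By admissibility of $\S(r_i,r_j)$ there is $\mathbf{g}\in R^k$ with $\psi(\mathbf{g})=\S(f_i,f_j)$ whose greatest module monomial w.r.t. $<$ is $T$. Subtracting the scalar multiple of $\mathbf{z}$ that cancels this leading term yields $\mathbf{g}'=\mathbf{g}-c\,\mathbf{z}$ with $\psi(\mathbf{g}')=\S(f_i,f_j)$ and greatest module monomial strictly below $T$. Hence $\S(f_i,f_j)$ admits a representation of signature $<T$; equivalently, its minimal natural signature is strictly smaller than the signature $T$ carried by the labelled $S$-polynomial. Since $<$ is a well-ordering, I would run an induction on the signature: combining this drop with the hypotheses of Theorem \ref{thm: sr for L} (which control all $S$-polynomials of smaller signature), $\S(r_i,r_j)$ can be rewritten entirely in terms of $G$ without ever raising the signature above $T$ or the leading monomial above $\LM(\S(r_i,r_j))$, i.e. it has a standard representation $\S(r_i,r_j)=\mathcal{O}_G(\S(r_i,r_j))$. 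Such a representation is exactly the condition under which Theorem \ref{thm: sr for L} permits the pair to be discarded, so the $S$-polynomial is superfluous.

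I expect the main obstacle to lie in the last step rather than in the single syzygy subtraction: one must verify that the signature-lowering, followed by the inductive rewriting, simultaneously respects both defining inequalities of a standard $t$-representation — the polynomial bound $\LM(h_s)\LM(p_s)\preceq\LM(\S(r_i,r_j))$ and the signature bound $\LM(h_s)\sig(p_s)\le T$ — with no intermediate cancellation pushing either quantity above its permitted value. Making this well-founded induction on $<$ fully rigorous is where the real work concentrates.
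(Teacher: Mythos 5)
Your core strategy --- exhibit a syzygy whose leading module monomial equals the signature of the $S$-polynomial, subtract it to force a signature drop, and finish by induction along the well-ordering $<$ --- is the standard modern argument behind the F$_5$ criterion. Note that the paper itself gives no proof here at all (its ``proof'' is the citation to Theorem~1 of \cite{F5}), so the comparison can only be against what a complete argument requires; measured that way, your proposal has one serious gap and one smaller one.

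The serious gap is the ``without loss of generality.'' By the paper's definition, the pair $(r_i,r_j)$ is non-normalized as soon as \emph{at least one} of $u_ir_i$, $u_jr_j$ is non-normalized, and the two components play asymmetric roles: the labelled $S$-polynomial exists only when $u_j\sig(r_j)<u_i\sig(r_i)$, and its signature is the \emph{larger} prolonged signature $T=u_i\sig(r_i)$. Your argument covers only the case where the signature-carrying component $u_ir_i$ fails normalization. If instead only $u_jr_j$ is non-normalized, the syzygy you can build has leading module monomial $u_j\sig(r_j)$, which lies strictly \emph{below} $T$ (it sits in position $b=\mathrm{index}(r_j)$ with monomial $u_jm_j$, and no monomial multiple of it can equal $T$ in general); subtracting it from an admissible representation of $\S(r_i,r_j)$ therefore cannot cancel the top module monomial $T$, the signature does not drop, and your induction never gets started. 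This case is not vacuous --- it is exactly why Faug\`ere requires normalization of \emph{both} multiplied components --- and it needs a genuinely different argument (rewriting $u_jf_j$ by a representation of smaller signature and then reorganizing the cancellation at $\lcm(\LM(f_i),\LM(f_j))$), which is a substantial part of the work in \cite{F5}.

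The smaller gap: non-normalization of $u_ir_i$ means $u_im\in\LM(\li f_{a+1},\ldots,f_k\ri)$, the leading-monomial ideal of the whole subideal, \emph{not} membership in the monomial ideal generated by $\LM(f_{a+1}),\ldots,\LM(f_k)$. So there need not exist a generator $f_\ell$ and monomial $w$ with $u_im=w\,\LM(f_\ell)$, and the Koszul syzygy $w(f_\ell\mathbf{e}_a-f_a\mathbf{e}_\ell)$ you invoke may not be available. The repair is routine: pick $h\in\li f_{a+1},\ldots,f_k\ri$ with $\LM(h)=u_im$, write $h=\sum_{\ell>a}q_\ell f_\ell$, and use $\mathbf{z}=h\,\mathbf{e}_a-f_a\sum_{\ell>a}q_\ell\,\mathbf{e}_\ell$, whose leading module monomial is still $u_im\,\mathbf{e}_a$ since every other position has index $>a$. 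Finally, the induction you defer is where the content lies and must be set up with care: since the point of the criterion is to \emph{remove} non-normalized pairs from the hypotheses of Theorem~\ref{thm: sr for L}, one cannot simply quote those hypotheses for all pairs (that would be circular); the rewriting has to be run as a simultaneous induction on signatures inside the proof of that theorem, with the usual minimal-representation argument keeping the bound $\LM(h_s)\LM(p_s)\preceq\LM(\S(r_i,r_j))$ under control.
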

\begin{pf}
See \cite{F5}, Theorem 1.
\end{pf}
The F$_5$ algorithm, in its original form \cite{F5}, as the first signature-based algorithm is rather difficult to understand and to implement. Gao, Guan and Volny IV \cite{Gao} (see also \cite{Gao2}) presented an algorithm called G$^2$V which may be considered as a version of the F$_5$ algorithm. This version seems to be simpler and  more efficient than the original F$_5$ (cf. the benchmarking in \cite{Gao}).  By this reason, we use G$^2$V to apply the F$_5$ criterion in construction of involutive bases.

To explain the structure of G$^2$V, assume that $G=\{g_1,\ldots,g_s\}$ is a Gr\"obner basis of $\li f_{i+1},\dots,f_k\ri$ where $1\leq i\leq k-1$. Our goal is to compute a Gr\"obner basis of $\li f_{i},\dots,f_k\ri$. Much like F$_5$C \cite{Eder}, G$^2$V uses the reduced Gr\"obner basis obtained at the preceding step of the algorithm. The structure of polynomials in G$^2$V is slightly different from that mentioned before. As a labelled polynomial, G$^2$V considers a pair $(m,f)\in R^2$ where $m$ is a monomial and $f\in R$ is a polynomial. The monomial $m$ is called the {\it signature} of the pair. A pair $(m,f)$ is {\em admissible}, if there exists $p \in R$ with $pf_i\equiv f$ {\rm mod} $G$ such that $\LM(p)=m$. This is consistent with  the representation of this labelled polynomial $(m{\mathbf{e}_i},f)$ in F$_5$.

Hereafter, we shall consider only admissible labelled polynomials and omit the term `admissible'.
Initially, G$^2$V considers the labelled polynomials $(0,g_1),\ldots,(0,g_s)$ and $(1,g)$ where  $g$ is the normal form of $f_i$ modulo $G$. Then, it creates the {\em J(oint)-pairs} of $(1,g)$ and of the other labelled polynomials. By definition, the J-pair of two labelled polynomials $(m,f)$ and $(m',f')$ is the labelled polynomial of the form $(t m,t f)$ with $t=\lcm(\LM(f),\LM(f'))/\LM(f)$ and  $t'=\lcm(\LM(f),\LM(f'))/\LM(f')$ satisfying $\LM(t' f')\preceq \LM(t f)$.

G$^2$V takes the J-pair with the smallest signature and repeatedly performs only {\it regular top-reductions} of this pair as long as such regular top-reduction is possible. A labelled polynomial $(m,f)$ is {\em top-reducible} by another labelled polynomial $(m',f')$ if there is a monomial $t \in R$ such that $\LM(f)=t \LM(f')$ and $t m' \prec m$. The corresponding top-reduction is defined as
$(m,f)-t(m',f')=(m,f/\LC(f)-t f'/\LC(f'))$.
If $t m'=m$, then the reduction is called {\it super}, otherwise it is called {\it regular}.

Let $(m,f)$ be the result of reduction of a labelled polynomial. If $f\ne 0$, then $(m,f)$ is added to the current \Gr basis, and the new J-pairs are formed. If $f=0$, then G$^2$V uses $m$ to delete useless J-pairs. Namely, a labelled polynomial $(m',f')$ can be discarded \cite{Gao} if $m'\ne 0$ and $m \mid m'$. In doing so, this kind of reduction is considered in \cite{Gao} as a super top-reduction too (see also \cite{Gao2}).  We shall also say that $(m',f')$ is super top-reducible by $(m,0)$ when $m \mid m'$.

Now, we state and prove the theorem which stems from the results of paper \cite{Gao} and provides the correctness of the G$^2$V algorithm (cf. \cite{Gao2}, Theorem 2.3) and thereby the correctness of applying the F$_5$ criterion in G$^2$V.

\begin{thm} [G$^2$V form of F$_5$ criterion]
\label{g2vthm}
Let $G$ be a Gr\"obner basis of $\li f_{i+1},\dots,f_k\ri$ and $G'$ the output of G\,$^2$V. Then the set $T=\{f \ |\  (m,f) \in G'\}$ is a Gr\"obner basis of $\li f_{i},\dots,f_k\ri$, if for each J-pair $(t m,t f)$ of the elements in $G'$ one of the following conditions holds:
\begin{enumerate}
\item  $(t m,t f)$ reduces to zero on the regular top-division by $G'$.
\item  $t m$ is multiple of an element in $\LM(G)$.
\item $(t m,t f)$ reduces to $(t m,g)$ on the regular top-division by $G'$ so that $(t m,g)$ is no longer regular top-reducible by $G'$, and $(t m,g)$ is super top-reducible by  $G'$.
\item $m'\mid t m$ where  $m'$ is the signature of a labelled polynomial $(m',0)$ obtained during the computation of $G'$.
\end{enumerate}
\end{thm}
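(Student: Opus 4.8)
\begin{pf}
The plan is to derive the statement from the Buchberger-type criterion for labelled polynomials (Theorem~\ref{thm: sr for L}) by transporting each of the four G$^2$V conditions into one of its hypotheses. The first and most important step is to fix a dictionary between the simplified G$^2$V setting and the module-theoretic one of Theorem~\ref{thm: sr for L}. A G$^2$V labelled polynomial $(m,f)$ produced at the $i$-th step is to be identified with the F$_5$-style labelled polynomial $(m\,\mathbf{e}_i,f)$, the module components indexed by $i+1,\ldots,k$ being absorbed into the fixed reduced Gr\"obner basis $G$ of $\li f_{i+1},\dots,f_k\ri$. Under this identification I would verify that G$^2$V-admissibility ($pf_i\equiv f \bmod G$ with $\LM(p)=m$) agrees with admissibility in $R^k$, that the monomial order on the G$^2$V signatures coincides with $<$ restricted to the $\mathbf{e}_i$-stratum, and --- the essential book-keeping fact --- that a J-pair $(tm,tf)$ is precisely the larger-signature half $u_s r_s$ of the labelled $S$-polynomial $\S(r_i,r_j)$, while a regular top-reduction is a signature-preserving (safe) reduction and a super top-reduction is one in which the signature cancels.

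Granting this correspondence, a single regular top-reduction of the J-pair $(tm,tf)$ against the lower-signature half first reconstructs $\S(f,f')$, so continuing to regular-reduce realizes a signature-bounded reduction of the actual $S$-polynomial. It then remains to match each condition to a branch of Theorem~\ref{thm: sr for L}. Condition~(1) directly exhibits a reduction of $\S(r_i,r_j)$ to zero by $G'$ through intermediate signatures not exceeding $tm$, i.e.\ a standard representation, giving the required $o_{G'}$-relation. Condition~(2) is the F$_5$ criterion itself: since $G$ is a Gr\"obner basis of $\li f_{i+1},\dots,f_k\ri$, divisibility of $tm$ by some $\LM(g)$ with $g\in G$ forces $tm\in\LM(\li f_{i+1},\dots,f_k\ri)$, so the J-pair is not \emph{normalized} and the theorem on non-normalized pairs stated above supplies a standard representation. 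In Condition~(3), super top-reducibility of the fully regular-reduced pair $(tm,g)$ by some $r\in G'$ means that a monomial multiple of $r$ shares the signature $tm$ and the leading monomial $\LM(g)$; subtracting it cancels both, dropping the signature and the leading monomial, so $\S(r_i,r_j)$ acquires a representation of strictly smaller leading monomial and bounded signature, i.e.\ an $o_{G'}$-relation. Finally, in Condition~(4) a recorded syzygy $(m',0)$ furnishes a module element of signature $m'$ mapping to $0$; multiplying it by $tm/m'$ produces a syzygy of signature $tm$ that rewrites $\S(r_i,r_j)$ as a combination of strictly smaller signature, again an $o_{G'}$-representation.

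To close the argument I would check the remaining hypothesis of Theorem~\ref{thm: sr for L}, namely that every generator occurs as a polynomial part: writing $g=\nf_{\L}(f_i,G)$ one has $\li f_i,\dots,f_k\ri=\li g\ri+\li f_{i+1},\dots,f_k\ri$, so the seed $(1,g)$ together with the signature-$0$ labelled versions of $G$ realize all the generators needed at this level. Theorem~\ref{thm: sr for L} then yields that $T=\{\poly(r)\mid r\in G'\}$ is a Gr\"obner basis of $\li f_i,\dots,f_k\ri$.

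The main obstacle is the dictionary of the first paragraph, not the subsequent case analysis. The delicate point is to prove that the stripped-down G$^2$V signatures, the reductions carried out \emph{modulo} the fixed basis $G$, and the J-pairs correspond faithfully to module signatures, standard representations, and labelled $S$-polynomials --- in particular that regular and super top-reductions respect exactly the signature bounds demanded by the definition of a standard representation. Once that faithfulness is secured, each of the four conditions collapses onto its matching branch of the criterion almost mechanically.
\end{pf}
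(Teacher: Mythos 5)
Your proposal is correct in substance, and its case analysis coincides with the paper's: condition (1) is immediate, condition (2) is dispatched by Faug\`ere's theorem on non-normalized pairs, and conditions (3)--(4) both rest on the observation that a super top-reduction cancels the leading monomial and the signature simultaneously, forcing a strict signature drop. The difference is in the packaging. The paper never invokes Theorem~\ref{thm: sr for L}: it argues directly that the polynomial part of every J-pair has a standard representation w.r.t.\ $T$ in the classical Becker--Weispfenning sense, and for conditions (3)--(4) (the latter treated as the special case $f'=0$ of the former) it carries out inline exactly the module computation your ``dictionary'' would have to justify --- using admissibility to write $g=p_if_i+\sum_{j>i}p_jf_j$ and $f'=p_i'f_i+\sum_{j>i}p_j'f_j$ with $\LM(p_i)=tm$, $\LM(p_i')=m'$, and checking that in $g-sf'$ the leading terms of the $f_i$-coefficients cancel, so the signature strictly drops. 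Thus the verification you defer as the ``main obstacle'' is precisely the only nontrivial content of the paper's proof; your reduction to Theorem~\ref{thm: sr for L} additionally obliges you to check that theorem's hypotheses (re-indexing so that $\{g\}\cup G$ plays the role of the generators, each occurring as a polynomial part), a step the paper's direct argument does not need. What your route buys is modularity: once the correspondence between G$^2$V signatures, reductions modulo $G$, and J-pairs on the one hand and module signatures, safe reductions, and labelled $S$-polynomials on the other is secured, the four conditions collapse formally onto a published criterion, and the argument transfers to other signature-based variants. Note, finally, that both your write-up and the paper's leave the same step implicit: passing from ``$g-sf'$ has signature strictly smaller than $tm$'' to ``$g-sf'$, and hence $tf$, has a (signature-bounded) standard representation'' requires an induction on signatures, i.e.\ that all elements of smaller signature have already been accounted for; neither proof spells this out, so this is not a gap of yours relative to the paper.
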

\begin{pf}
We must prove that for each J-pair $(t m,t f)$ of the elements in $G'$, $t f$ has a {\em standard representation} w.r.t. $T$ (see \cite{Becker}, Theorem 5.64). The reducibility of the pair to zero yields immediately the standard representation for $(t m,t f)$. If the second condition holds, the pair is not normalized and we refer to \cite{F5}, Theorem 1 (see also \cite{Mora}). To prove the third item, suppose that $(t m,t f)$ reduces to $(t m,g)$ on the regular top-division by $G'$, and $(t m,g)$ is super top-reducible by $(m',f')\in G'$, i.e. $t m=s m'$ and $t \LM(g)=s \LM(f')$ for some monomial $s\in R$. Note that  $(t m,g)$ may be super top-reducible by an element $(m',0)$. In this case, which corresponds to the forth condition, we consider $f'=0$. From admissibility of $(t m,g)$ and $(m',f')$ we can write $g=p_i f_i+\sum_{j=i+1}^k p_j f_j$ and $f'=p'_i f_i+\sum_{j=i+1}^k p'_j f_j$ where $p_j,p'_j\in R$, $p_i,p'_i$ are monic, $\LM(p_i)=t m$ and $\LM(p'_i)=m'$. In that follows we denote $p-\LT(p)$ by $\tail(p)$ for a polynomial $p$. Thus,
\begin{eqnarray*}
g&=&t m f_i+\tail(p_i)f_i+\sum_{j=i+1}^k p_j f_j\\
&=& s m' f_i+ \tail(p_i) f_i+\sum_{j=i+1}^k  p_j f_j\\
&=&s f'-s\cdot \tail(p'_i) f_i-\sum_{j=i+1}^k s p'_j f_j+\tail(p) f_i+\sum_{j=i+1}^k  p_j f_j\\
&=&s f'+( \tail(p_i)-s\cdot \tail(p'_i)) f_i+\sum_{j=i+1}^k (p_j-s p'_j) f_j\,.
\end{eqnarray*}
This implies that polynomial $g-s f'$ can be written as
$$( \tail(p_i)-s\cdot \tail(p'_i)) f_i+\sum_{j=i+1}^k (p_j-s p'_j) f_j\,.$$
In accordance to the choice of monomial ordering made in Section \ref{sec:1}, the signature of labelled polynomial with this polynomial part is strictly less than $t m$. Therefore, $t f$ has a standard representation w.r.t. $T$.
\end{pf}

\begin{rem}
In the G$^2$V algorithm, there are usually many J-pairs with the same signature. In this case Gao, Volny IV and Wang \cite{Gao2} claim  that one can just store one J-pair $(m,f)$ whose polynomial part $f$ has the minimal leading monomial and discard all other J-pairs with signature $m$ (see \cite{Gao2}, Theorem 2.3 for more details).
\end{rem}

\begin{rem}
Until recent papers by Huang \cite{huang}, by Pan, Hu and Wang \cite{pan} and by Galkin\cite{galkin} there has not been strong evidence for termination of the F$_5$ (and respectively G$^2$V) algorithm. Based on the results of~\cite{huang}, in their preprint \cite{Gao2} Gao, Volny IV and Wang formulated the termination condition as compatibility of the monomial ordering $\succ$ with the module monomial ordering $>$:
$$ \LM(f) \mathbf{e}_i > \LM(g) \mathbf{e}_i\ \ \text{if and only if}\ \ \LM(f)\succ \LM(g)\,.$$
Note that the orderings we use satisfy this compatibility condition.
\label{termination}
\end{rem}

\section{Involutive completion algorithm}
\label{Inc}

In this section, we present an algorithm which applies the F$_5$ criterion in computing involutive bases.  Let ${\I}=\li f_1,\ldots,f_k\ri \subset R=K[x_1,\ldots,x_n]$ be an ideal, $\prec$ be a monomial ordering on $R$, and ${\L}$ be an involutive division. By an {\em incremental algorithm} for construction of an involutive basis for ${\I}$, we mean that based on the sequential construction of involutive bases for the ideals $\li f_k\ri, \li f_{k-1},f_k\ri,\ldots,\li f_1,\ldots,f_k\ri$  and on the use of an involutive basis of $\li f_{i+1},\ldots,f_k\ri$ for construction of a basis for $\li f_i,\ldots,f_k\ri$. The main obstacle to such incremental construction is that at each step we must manipulate with the multiplicative and nonmultiplicative variables for the leading monomials in the whole set of intermediate polynomials. To get over this obstacle we design an algorithm not in the incremental style, i.e. we do not compute {\em completely} the involutive basis for the corresponding intermediate ideals $\li f_i,\ldots,f_k\ri$ $(1<i<k)$. Instead, having added a new polynomial to the intermediate polynomial set, we use it to update all the previous steps by taking into account the leading monomial of the new polynomial.  This provides, after termination of the algorithm, that the obtained basis is an involutive one of the input ideal.  We call this process {\em completion} of the input polynomial set {\em to involution} in accordance to the conventional terminology used in the theory of involution~\cite{seiler}. For this purpose, we use a signature based selection strategy w.r.t. the module monomial ordering defined in Section \ref{F5algo}. More precisely, we select a polynomial whose signature is minimal w.r.t. $<$, and therefore a chosen  polynomial (from a set of polynomials to process) has the maximal index. We shall call this strategy the {\em G\,$^2$V selection strategy}.

We describe now the structure of polynomials that is used in our new algorithm. To apply the F$_5$ criterion, we must rely on the structure of labelled polynomials defined in Section \ref{F5algo}. In doing so, we present a labelled polynomial $r$ in the form $r=(m\cdot \mathbf{e}_i,f)$ where $\sig(r)=m\cdot \mathbf{e}_i$ and $\pol(r)=f$, whereas in the algorithm implementation (see Section \ref{Exp}) the G\,$^2$V form of labelled polynomials is used.

In \cite{gerdt0}, the involutive form of Buchberger's criteria were presented to avoid a part of unnecessary reductions (see also \cite{detecting,gerdtnew}). In order to use these criteria in our algorithm and to avoid the repeated prolongations, we add extra information to the labelled structure of polynomials. This extra information is similar to that used in~\cite{gerdtnew}. So, we recall the following definition.

\begin{defn}
Let $F \subset R \setminus \{0\}$ be a finite set of polynomials. An {\em ancestor} of a polynomial $f \in F$, denoted by $\anc(f)$, is a polynomial $g \in  F$ of the smallest $\deg(\LM(g))$ among those satisfying $\LM(f) = u\cdot \LM(g)$ where $u$ is either the unit monomial or a power product of nonmultiplicative variables for $\LM(g)$ such that $\nf_{\L}(f-u\cdot g,F\setminus\{f\})=0$ if $f\neq u\cdot g$.
\label{ancestor}
\end{defn}

This additional information on the history of prolongations allows to avoid some unnecessary reductions by applying the adapted Buchberger's criteria (below we discuss these criteria after presenting the main algorithm). Now, to each polynomial $f$, we associate a quadruple $p=(m\cdot \mathbf{e}_i,f,g,V)$ where $\poly(p)=f$ is the polynomial part of $p$, $\sig(p)=m\cdot \mathbf{e}_i$ is its signature part, $\anc(p)=g$ is the ancestor of $f$ (or of $p$) and $NM_{\L}(p)=V$ is the set of nonmultiplicative variables of $f$ such that the corresponding prolongations of the polynomial have been already constructed. By keeping this set, one can avoid the repeated treatment of nonmultiplicative prolongations. If $P$ is a set of quadruples, we denote by $\poly(P)$ the polynomial set $\{\,\poly(p) \ \mid \ p\in P\,\}$. Where no confusion can arise, we may refer to a quadruple $p$ instead of $\poly(p)$, and vice versa.

Our main algorithm {\sc InvComp}, given a finite set $F$ of polynomials, an admissible monomial ordering and an involutive division ${\L}$,  computes an ${\L}$-basis of $\li F\ri$ by completion of the input set with non-zero polynomials that are computed in the course of the algorithm.

\begin{algorithm1}{\sc InvComp \label{invcomp}}
\begin{algorithmic}[1]
\INPUT $F=\{f_1,\ldots,f_k\}$, a finite set of nonzero polynomials; ${\L}$, an involutive division; $\prec$, a monomial ordering such that $\LM(f_1)\succeq \LM(f_2)\succeq \cdots \succeq \LM(f_k)$
\OUTPUT a minimal ${\L}$-basis of $\li F \ri$
\STATE $ArxivLM:=[[\LM(f_1)],\ldots,[\LM(f_k)]]$;
\STATE $T:=\{({\bf e}_k,f_k,f_k,\emptyset)\}$;\quad $Q:=\emptyset$;
\IF{$k>1$}
     \STATE $Q:=\{(\mathbf{e}_i,f_i,f_i,\emptyset)  \ |\ i=1,\ldots,k-1\}$;
\ENDIF
\WHILE {$Q \ne \emptyset$}
     \STATE Select and remove $p=(m\cdot \mathbf{e}_i,f,g,V) \in Q$ with minimal signature w.r.t. $<$
     \STATE $h:=${\sc RegNormalForm}$(p,T,{\L},\prec)$;
     \IF {$h = 0$}
          \IF{$\LM(f)=\LM(g)$}
            \STATE $T:=T\setminus \{\,t\in T \mid anc(t)=g\,\}$;
          \ENDIF
     \ELSE
       \STATE $ArxivLM[i]:=ArxivLM[i] \cup \{\LM(h)\}$;
       \IF {$\LM(f)\ne \LM(h)$}
               \STATE $T:=T \cup \{(m\cdot \mathbf{e}_i,h,h,\emptyset)\};$
       \ELSE
                \STATE $T:=T \cup \{(m\cdot \mathbf{e}_i,h,g,V)\};$
       \ENDIF
       \FOR {$q\in T$ and $x\in NM_\L(\LM(\poly(q)),\LM(\poly(T))) \setminus NM_{\L}(q)$}
           \STATE $Q:=Q \cup \{(x\cdot \sig(q),x\cdot \poly(q),\anc(q),\emptyset)\}$;
           \STATE $NM_{\L}(q):=NM_{\L}(q)\cup \{x\}$;
           \IF{$\LM(h)\mid_{\L}\LM(\poly(q))$}
                 \STATE $u:=\frac{\LM(\poly(q))}{\LM(h)}$;
                 \IF{$q - u\cdot h\neq 0$}
                    \STATE $Q:=Q \cup \{(u\cdot m\cdot \mathbf{e}_i,q-u\cdot h,q-u\cdot h,\emptyset)\}$;
                 \ENDIF
          \ENDIF
       \ENDFOR
   \ENDIF
\ENDWHILE
\STATE $G:=${\sc MinBas}$(\poly(T),{\L},\prec)$;
\RETURN ($G$) 
\end{algorithmic}
\end{algorithm1}
\vskip 0.2cm

\noindent
The involutive completion is performed in the {\bf while}-loop (lines 6-31). Noetherianity of the input involutive division ${\L}$ and its constructivity~\cite{gerdt0,alex} provide the existence of an ${\L}$-basis by processing only the nonmultiplicative prolongations~\cite{gerdtnew,gerdt0}. In its line 7, the algorithm {\sc InvComp} uses the G\,$^2$V selection strategy for an element in $Q$ to be processed in the {\bf while}-loop. The ${\L}$-reductions of the chosen polynomial are performed by the algorithm {\sc RegNormalForm} invoked in line 8.

For a constructive involutive division a minimal \Gr basis is a well-defined subset of any involutive basis \cite{gerdt0}.  The {\bf while}-loop outputs an involutive basis $\poly(T)$, and its minimal involutive subset is computed in line 32 by the subalgorithm {\sc MinBas}.

$ArxivLM$ is a global variable. At the initialization step of {\sc InvComp}) to the $i$-th element of $ArxivLM$ the leading monomial of the input polynomial $f_i$ is assigned. Then this element of $ArxivLM$ collects (line 14 of {\sc InvComp}) the leading monomials of those computed polynomials which belong to the ideal $\li f_i,\ldots,f_k\ri$ and whose signature basis vector is $\mathbf{e}_i$. Furthermore, the set $Q$ of the polynomials to process, is another global variable and we update it in {\sc RegNormalForm} invoked in line 8. This algorithm returns, by performing regular ${\L}$-reductions, an ${\L}$-{\em regular normal form} of the polynomial under processing. Indeed, a labelled polynomial $p_1=(m_1\cdot {\bf e}_{i_1},f_1,g_1,V_1)$  is  ${\L}$-regular top-reducible by $p_2=(m_2\cdot {\bf e}_{i_2},f_2,g_2,V_2)$ if $\LM(f_1)$ is ${\L}$-divisible by $\LM(f_2)$, and  for the $t \in R$ such that $\LM(f_1)=t\cdot \LM(f_2)$ the relation $t\cdot m_2\cdot {\bf e}_{i_2} < m_1\cdot {\bf e}_{i_1}$ holds. Remark that all polynomials occurring in the computation are assumed to be monic. Thus, the corresponding top-reduction of the leading terms is given by $p_1-t\cdot p_2$. Recall that if $t\cdot m_2\cdot {\bf e}_{i_2} = m_1\cdot {\bf e}_{i_1}$ the ${\L}$-reduction is {\em super}, otherwise it is {\em regular}.

\vskip 0.2cm
\begin{subalgorithm1}{\sc RegNormalForm}
\label{RegNormalForm}
\begin{algorithmic}[1]
\INPUT $p$, a quadruple;  $T$ a finite set of quadruples; ${\L}$, an involutive division; $\prec$, a monomial ordering
\OUTPUT ${\L}$-regular normal form of $\poly(p)$ modulo $T$
\STATE $h:=\poly(p)$;
\STATE $r:=0$;
\WHILE {$h\ne 0$}
   \IF {$(\,\exists q\in T\,)\ [\,\LM(\poly(q))\mid_{\L}\LM(h)\,] $}
   \STATE Select such $q$;
   \STATE $u:=\frac{\LM(h)}{\LM(\poly(q))}$;
     \IF { $u\sig(q)\le \sig(p)$}
         \IF {$\LT(h)=\LT(\poly(p))$ and {\sc Criteria}($p,q$)}
              \RETURN ($0$)
       \ELSE
              \STATE $h:=h-\poly(q)\frac{\LT(h)}{\LT(\poly(q))}$;
       \ENDIF
     \ELSE
        \STATE $Q:=Q \cup \{(u\cdot \sig(q),h-u\ \poly(q),h-u\ \poly(q),\emptyset)\}$;
     \ENDIF
   \ELSE
     \STATE $r:=r+\LT(h)$;
     \STATE $h:=h-\LT(h)$;
   \ENDIF
\ENDWHILE
\RETURN ($r$)
\end{algorithmic}
\end{subalgorithm1}
\vskip 0.2cm
\noindent
The subalgorithm {\sc RegNormalForm} performs the ${\L}$-regular top-reductions and also some involutive tail reductions. Moreover, this  subalgorithm detects some unnecessary reductions by applying the F$_5$ criterion and the involutive form of Buchberger's criteria.

\noindent
In \cite{gerdt0}, the involutive consequences $C_1$ and $C_2$ of Buchberger's first and second criteria, respectively,  were presented to avoid some unnecessary reductions (see Lemma \ref{buch}). Then, Apel and Hemmecke in \cite{detecting} discovered two more criteria (see also \cite{gerdtnew}) that in the aggregate with $C_2$ are equivalent to Buchberger's chain criterion. The computer experimentation done by the first author and Yanovich \cite{yanovich} revealed that these two criteria, being applied when the criteria  $C_1$ and $C_2$ are not applicable, often (for not very large examples) slowdown computation of involutive bases. That is why, in the given paper we use only the criteria $C_1$  and $C_2$.

In the subalgorithm {\sc Criteria} with the input polynomials $p$ and $q$, the Boolean expression Buch($p,q$) is true if at least one of the criteria $C_1$ or $C_2$ is applicable, and false otherwise.
The correctness of applying $C_1$ or $C_2$ in our algorithm under the G\,$^2$V selection strategy is provided by the following lemma (cf. \cite{gerdt0,detecting}).

\begin{lem}
\label{buch}
  Let ${\I}\subset R$ be an ideal, $\prec$ be a monomial ordering on $R$ and ${\L}$ be an involutive division. Let $P:=\poly(T)\subset {\I}$ be the current polynomial set computed in the course of {\sc InvComp}, and $f=\poly(p)\in {\I}$ be the polynomial selected in line 7 of the last algorithm. Then $\nf_{\L}(f,P)=0$ if there exists $q \in P$ with $\LM(q) \mid_{\L} \LM(f)$  satisfying one of the following conditions:
\begin{enumerate}
\item[$\hspace*{0.5cm}(C_1)$] $\LM(\anc(f))\cdot \LM(\anc(q))=\LM(f)$,
\item[$(C_2)$] $\lcm(\LM(\anc(f)),\LM(\anc(q)))$ is a proper divisor of $\LM(f)$.
\end{enumerate}
\end{lem}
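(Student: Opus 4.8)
The plan is to show that the two involutive criteria $C_1$ and $C_2$ are genuine \emph{sufficient conditions} for the involutive normal form of $f$ to vanish, by reducing each of them to the classical Buchberger criteria and exploiting the ancestor bookkeeping recorded in Definition~\ref{ancestor}. The underlying principle is that these criteria are, in the aggregate, equivalent to Buchberger's first and second criteria (as noted in the excerpt via \cite{detecting,gerdtnew}), so the work lies in verifying that the hypotheses of the lemma actually place us in the situation where a Buchberger criterion applies, and in checking that the G$^2$V selection strategy does not invalidate the classical argument. First I would unwind the definition of $\anc(f)$: by Definition~\ref{ancestor}, $f$ arises as a nonmultiplicative prolongation $u\cdot\anc(f)$ reduced to zero modulo the rest of the set, so $\LM(f)=u\cdot\LM(\anc(f))$ with $u$ a product of nonmultiplicative variables, and similarly for $q$. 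This lets me rewrite both hypotheses purely in terms of the leading monomials of ancestors.

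For $(C_1)$, the condition $\LM(\anc(f))\cdot\LM(\anc(q))=\LM(f)$ means precisely that the leading monomials of the two ancestors are \emph{coprime} in the sense that their product equals their least common multiple divided by nothing — i.e. $\gcd(\LM(\anc(f)),\LM(\anc(q)))=1$ is the relevant reading, which is exactly Buchberger's first criterion applied to the pair $(\anc(f),\anc(q))$. I would then invoke Buchberger's first criterion to conclude that the corresponding $S$-polynomial reduces to zero, and translate this back through the involutive prolongation structure: since $\LM(f)=\lcm(\LM(\anc(f)),\LM(\anc(q)))$, the prolongation $f$ factors through a standard $S$-polynomial representation, whence $\nf_{\L}(f,P)=0$. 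For $(C_2)$, the condition that $\lcm(\LM(\anc(f)),\LM(\anc(q)))$ is a \emph{proper} divisor of $\LM(f)$ is the hypothesis of Buchberger's second (chain) criterion: it exhibits a third element whose leading monomial divides the lcm, so the $S$-polynomial of the relevant pair has a standard representation, and again the prolongation $f$ inherits a representation forcing its involutive normal form to be zero. In both cases I would appeal to Theorem~\ref{blin} to phrase ``$\nf_{\L}(f,P)=0$'' as the assertion that the prolongation is covered by the existing involutive basis.

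The hard part will be justifying the compatibility of these criteria with the signature-based (G$^2$V) selection strategy, rather than the monomial combinatorics, which is essentially \cite{gerdt0}. The delicacy is that in a signature-based setting one may only discard a reduction when doing so does not destroy the admissibility or the signature-coverage guaranteed by Theorem~\ref{thm: sr for L} and Theorem~\ref{g2vthm}; I must confirm that an element satisfying $C_1$ or $C_2$ would, under the module ordering $<$, already be $o_G$-covered by previously processed elements of smaller signature. Concretely, because $f$ is selected with minimal signature w.r.t.\ $<$, every element $q$ witnessing the criterion has $\sig(q)\le\sig(f)$, so the standard representation produced by the Buchberger argument has all its terms bounded in signature by $\sig(f)$; this is exactly the second condition $\LM(h_i)\sig(p_i)\le\sig(r)$ in the definition of a $t$-representation. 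Thus the classical zero-reduction lifts to an admissible, signature-respecting one, and no unsafe (signature-increasing) reduction is masked. I would close the argument by remarking that this is precisely the interplay already present in \cite{detecting} adapted to the module ordering of Section~\ref{F5algo}, so the criteria remain valid when applied inside {\sc RegNormalForm}.
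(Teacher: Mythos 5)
Your overall plan (reduce $C_1$, $C_2$ to Buchberger's criteria and check compatibility with the G$^2$V selection strategy) points in the right direction, but it contains two genuine gaps. First, you misread $C_1$: the equality $\LM(\anc(f))\cdot \LM(\anc(q))=\LM(f)$ does \emph{not} say that the ancestors' leading monomials are coprime. Coprimality is the statement that their product equals their lcm; $C_1$ says their product equals $\LM(f)$, and $\LM(f)$ may strictly exceed the lcm (take $\LM(\anc(f))=x^2$, $\LM(\anc(q))=xy$, $\LM(f)=x^3y$: then $C_1$ holds while $\lcm(x^2,xy)=x^2y$ is a proper divisor of $\LM(f)$). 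The paper accordingly splits $C_1$ into two cases: case $(ii)$, where $\LM(f)=\lcm(\LM(\anc(f)),\LM(\anc(q)))$, the ancestors are indeed coprime and Buchberger's first criterion applies — this is the only case your argument covers; and case $(i)$, where the lcm properly divides $\LM(f)$, for which your argument says nothing. In case $(i)$ — and likewise for $C_2$ — the paper does not invoke the chain criterion with a ``third element'' (your proposal asserts that the hypothesis ``exhibits a third element whose leading monomial divides the lcm'', but no such element is exhibited; the hypotheses involve only $f$, $q$ and their ancestors). Instead, the paper's key step is the factorization $f-t\cdot q = s\cdot(u'\cdot \anc(f)-v'\cdot \anc(q))$ with $s\neq 1$, together with the observation that $u'\cdot \anc(f)-v'\cdot \anc(q)$ has signature strictly smaller than $\sig(f)$; by the G$^2$V selection strategy this polynomial was processed \emph{before} $f$, hence already has a standard representation w.r.t.\ $P$, and multiplying by $s$ yields one for $f-t\cdot q$. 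This signature argument, which replaces the chain criterion, is absent from your proposal.

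Second, even granting a standard representation, you only obtain that the \emph{conventional} normal form of $f$ modulo $P$ vanishes, whereas the lemma claims $\nf_{\L}(f,P)=0$, and involutive reduction is more restrictive than conventional reduction. Your appeal to Theorem \ref{blin} cannot close this gap: that theorem (like Proposition \ref{thmgerdt}) concerns a set that already \emph{is} an involutive basis, while $P$ is an intermediate, not yet involutive, set. The paper bridges the two notions by a partial-involutivity argument: the G$^2$V selection strategy together with the enlargements of $Q$ in line 26 of {\sc InvComp} and line 14 of {\sc RegNormalForm} guarantee that $P$ is involutive ``up to the monomial $\LM(f)=\LM(t)\cdot\LM(q)$'', so that for $f$ itself the conventional and ${\L}$-normal forms coincide. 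Without this step, or a substitute for it, the conclusion $\nf_{\L}(f,P)=0$ does not follow from the Buchberger-style argument.
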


\begin{proof}
  Suppose that $f$ and $q$ satisfy $C_1$. Then, the following two cases are possible.
  \begin{enumerate}
  \item[$(i)$] $\lcm(\LM(\anc(f)),\LM(\anc(q)))$  is a proper divisor of $\LM(f)$, i.e there is a monomial $s\ne 1$ such that $\LM(f)=s\cdot \lcm(\LM(\anc(f)),\LM(\anc(q)))$.
  \item[$(ii)$] $\LM(f)=\lcm(\LM(\anc(f)),\LM(\anc(q)))$.
  \end{enumerate}
  In the case $(i)$ without loss of generality and in accordance to Definition \ref{ancestor} we may let $f=u\cdot \anc(f)$, $q=v\cdot \anc(q)$ and $\LT(f)=t\cdot \LT(q)$ for some monomials $u$ and $v$ and term $t$. Thus,
  $$f-t\cdot q=u\cdot \anc(f)-t\cdot v\cdot \anc(q)=s\cdot (u'\cdot \anc(f)-v'\cdot \anc(q))$$
  where $u=s\cdot u'$ and $v\cdot t=s\cdot v'$ for some monomials $u'$ and $v'$. Furthermore, since $\lcm(\LM(\anc(f)),\LM(\anc(q)))$  is a proper divisor of $\LM(f)$, $\sig(\anc(f))\le \sig(f)$ and $\sig(\anc(q))\le \sig(q)$, the polynomial $u'\cdot \anc(f)-v'\cdot \anc(q)$ has a signature strictly less than $\sig(f)$. Hence, $u'\cdot \anc(f)-v'\cdot \anc(q)$ has been processed before $f$ (by the G\,$^2$V selection strategy). Therefore, $u'\cdot \anc(f)-v'\cdot \anc(q)$ has a standard representation w.r.t. $P$. This implies that $f-t\cdot q$ has also a standard representation w.r.t. $P$ and thus, the \Gr normal form of $f$ modulo $P$ is zero. Furthermore, the G\,$^2$V selection strategy and the extension of $Q$ done in lines 26 of {\sc InvCom} and 14 of {\sc RegNormalForm}  guarantee that the \Gr normal form of $p$ modulo $P$ coincides with the ${\L}$-normal form (by the {\em partial involutivity} of $P$ up to the monomial $\LM(f)=\LM(t)\cdot \LM(q)$ \cite{gerdt0}).  Therefore, $\nf_{\L}(f,P)=0$.

  In the case $(ii)$, by Buchberger's first criterion, the equality $$\lcm(\LM(\anc(f)),\LM(\anc(q)))=\LM(\anc(f))\cdot \LM(\anc(q))$$ implies that $u'\cdot \anc(f)-v'\cdot \anc(q)$ has a standard representation w.r.t. $P$, and the equality $\nf_{\L}(f,P)=0$ is proved by exactly the same reasoning as used above for the case $(i)$.

If $f$ and $q$ satisfy $C_2$, then Buchberger's chain criterion is applicable ~\cite{gerdt0} to $f-t\cdot q$, and the proof is similar to that done for $C_1$.
\end{proof}

\vskip 0.5cm
\begin{subalgorithm1}{\sc  Criteria}
\label{Criteria}
\begin{algorithmic}[1]
\INPUT $p=(m\cdot \mathbf{e}_i,f,g,V)$ and $q$, quadruples
\OUTPUT true if one of the criteria listed in Theorem \ref{g2vthm} or  Lemma \ref{buch} holds, and false otherwise\\[0.1cm]
       \IF {$\frac{\LM(\poly(p))}{\LM(\poly(q))}\sig(q)= \sig(p)$ or Buch($p,q$)}
              \RETURN (true)
       \ENDIF
       \FOR {$j$ from $i+1$ to $k$}
               \IF {$t$ divides $m$ for some $t\in ArxivLM[j]$}
                    \RETURN (true)
               \ENDIF
       \ENDFOR
\RETURN (false)
\end{algorithmic}
\end{subalgorithm1}
\vskip 0.5cm

\begin{rem}
  Let $p=(m\cdot \mathbf{e}_i,f,g,V)$ be a quadruple that is selected in line 7 of the main algorithm for its processing in the {\bf while}-loop. If $m$ is divisible by a monomial in $ArxivLM[j]$ for $j>i$, then we can eliminate $p$ by the F$_5$ criterion in accordance to the structure of the algorithm and Theorem \ref{g2vthm}.
\end{rem}

\noindent
The particular type of reduction that we use in the {\sc RegNormalForm} subalgorithm makes inapplicable the displacement done by the {\bf for}-loop (lines 8-11) in the algorithm {\sc InvBas} (see also \cite{gerdtnew}) to construct a minimal involutive basis. Instead, we use the algorithm {\sc MinBas} based on the following trivial lemma to extract a minimal involutive basis (see Definition~\ref{def_min}) from a given involutive basis (cf. \cite{gerdt2}).

\begin{lem}
\label{lem1}
Let $G\subset R$ be an involutive basis, $\prec$ a monomial ordering on $R$ and ${\L}$ an involutive division. Then, $G$ is a minimal involutive basis  if and only if $\LM(G)$ is a minimal monomial involutive basis.
\end{lem}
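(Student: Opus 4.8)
The plan is to reduce both directions of the equivalence to a single statement comparing two families of leading-monomial sets. Write $\I=\li G\ri$ and $J=\li \LM(G)\ri$; since any involutive basis is a \Gr basis, $J=\LM(\I)$. Both notions of minimality in Definition \ref{def_min} are phrased purely through leading monomials: $G$ is a minimal involutive basis exactly when $\LM(G)\subseteq \LM(\tilde G)$ for every involutive basis $\tilde G$ of $\I$, and $\LM(G)$ is a minimal monomial involutive basis exactly when $\LM(G)\subseteq B$ for every monomial $\L$-basis $B$ of $J$. Hence it suffices to prove that the two families
$$\mathcal{C}_1=\{\,\LM(\tilde G)\mid \tilde G\ \text{an involutive basis of}\ \I\,\},\qquad \mathcal{C}_2=\{\,B\mid B\ \text{a monomial}\ \L\text{-basis of}\ J\,\}$$
coincide: then $\LM(G)$, which lies in both families, is contained in every member of $\mathcal{C}_1$ if and only if it is contained in every member of $\mathcal{C}_2$, which is precisely the asserted equivalence.

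First I would establish the auxiliary fact that, for any $\tilde G\subseteq \I$ with $\li \tilde G\ri=\I$, the set $\tilde G$ is an involutive basis of $\I$ if and only if $\LM(\tilde G)$ is a monomial $\L$-basis of $J$. This is a direct unwinding of Definition \ref{def_invbase}: every monomial $m\in J=\LM(\I)$ is the leading monomial of some $f\in \I$ (multiply a generator of $J$ that divides $m$ by the appropriate cofactor), and the condition $\LM(g)\mid_{\L}\LM(f)$ for some $g\in \tilde G$ says exactly that $\LM(g)\mid_{\L} m$ for some $\LM(g)\in\LM(\tilde G)$. In particular this yields the inclusion $\mathcal{C}_1\subseteq \mathcal{C}_2$.

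The harder inclusion is $\mathcal{C}_2\subseteq \mathcal{C}_1$, the \emph{lifting} step, which I expect to be the main obstacle. Starting from an arbitrary monomial $\L$-basis $B$ of $J$, I would pick for each $w\in B$ a monic $f_w\in \I$ with $\LM(f_w)=w$ (possible since $w\in J=\LM(\I)$) and set $\tilde G=\{\,f_w\mid w\in B\,\}$, so that $\LM(\tilde G)=B$. The delicate point is to verify that $\tilde G$ generates all of $\I$ and not merely a proper subideal. From $\li \LM(\tilde G)\ri=\li B\ri=J=\LM(\I)$ together with $\tilde G\subseteq \I$ one gets $\LM(\li \tilde G\ri)=\LM(\I)$, whence the standard leading-term argument (choose an element of $\I\setminus\li \tilde G\ri$ of minimal leading monomial and cancel its leading term against a matching element of $\li \tilde G\ri$) forces $\li \tilde G\ri=\I$. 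The auxiliary fact above then makes $\tilde G$ an involutive basis of $\I$ with $\LM(\tilde G)=B$, so $B\in \mathcal{C}_1$.

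With $\mathcal{C}_1=\mathcal{C}_2$ in hand, the two minimality conditions on $\LM(G)$ become literally identical, proving both directions at once. I expect everything except the leading-ideal argument in the lifting step to be an immediate translation between the polynomial and monomial formulations of Definitions \ref{invdiv}, \ref{def_invbase} and \ref{def_min}, which is why the statement is rightly called trivial.
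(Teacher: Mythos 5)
Your proof is correct, but there is nothing in the paper to measure it against: the authors state Lemma \ref{lem1} as a ``trivial lemma'' (cf.~\cite{gerdt2}) and give no proof at all, so your write-up supplies precisely the details they suppress. Your reduction --- both minimality notions are conditions on leading monomials only, so it suffices to identify the family $\{\LM(\tilde G)\mid \tilde G \text{ an involutive basis of } \I=\li G\ri\}$ with the family of monomial $\L$-bases of $J=\LM(\I)$ --- is the natural formalization of the intended triviality, and both inclusions are argued soundly; in particular, the lifting step correctly exploits the key structural point that the relation $\mid_{\L}$ is computed relative to the monomial set $\LM(\tilde G)=B$, which the lift leaves unchanged. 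One simplification is available: your verification that $\li \tilde G\ri=\I$ is superfluous, because Definition \ref{def_invbase} asks of an involutive basis only that $\tilde G\subset \I$ and that every $f\in\I$ have an involutive divisor of $\LM(f)$ among $\LM(\tilde G)$; since $\LM(f)\in J$ and $B$ is an $\L$-basis of $J$, that condition follows at once, and generation of $\I$ is then a consequence (by the standard leading-term argument you cite, via the fact that an involutive basis is a Gr\"obner basis) rather than a hypothesis that must be checked before your auxiliary fact can be applied.
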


\begin{algorithm1}{\sc MinBas \label{minimal }}
\begin{algorithmic}[1]
\INPUT $H$, an ${\L}$-basis of $\li H \ri$; ${\L}$, an involutive division; $\prec$, a monomial ordering
\OUTPUT $G$, a minimal ${\L}$-basis of $\li H \ri$
       \STATE Select and remove a polynomial $p \in H$ with no proper divisor of $\LM(p)$ in $\LM(H)$
       \STATE $G:=\{p\}$;
       \WHILE { $H\neq \emptyset$ }
          \STATE Select a polynomial $h \in H$ without proper divisors of $\LM(h)$ in $\LM(H)$
          \STATE $H:=H \setminus \{h\}$;
          \IF { $\not \exists\, g\in G$ s.t. $\LM(h)\in {\L}(\LM(g),\LM(G))$}
            \STATE $G:=G\cup \{h\}$
          \ENDIF
       \ENDWHILE
\RETURN ($G$)
\end{algorithmic}
\end{algorithm1}
\vskip 0.2cm
\noindent
To prove the correctness of the suggested algorithm we discuss first the correctness of the involutive form of Theorem \ref{g2vthm}. For this purpose, we need the following proposition which is an obvious consequence of Definition~\ref{def_invbase} and the fact that the involutive divisibility implies the conventional divisibility (cf.~\cite{gerdt0}).

\begin{prop}
\label{thmgerdt}
  Let $F\subset R$ be a finite set, $\prec$ a monomial ordering on $R$ and ${\L}$ an involutive division. If $F$ is an involutive basis, then the equality of the conventional and ${\L}$-normal forms modulo $F$ and $\prec$ holds for any  normal form algorithm.
\end{prop}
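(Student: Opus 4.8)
The plan is to reduce everything to two facts already available: an involutive basis is a \Gr basis (noted right after Definition~\ref{def_invbase}), and involutive divisibility is a restriction of ordinary divisibility, so that $\LM(g)\mid_{\L} m$ implies $\LM(g)\mid m$. Fix an arbitrary $f\in R$ and set $\I:=\li F\ri$. Since $F$ is a \Gr basis of $\I$, the conventional normal form modulo $F$ is independent of the reduction strategy; I denote this single well-defined polynomial by $\NF(f,F)$. It then suffices to prove the identity $\nf_{\L}(f,F)=\NF(f,F)$, as this simultaneously shows algorithm-independence of both normal forms and their equality.

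First I would record the two standing observations. Each involutive reduction step subtracts a multiple $c\cdot t\cdot g$ with $g\in F$ and $t$ a monomial, hence an element of $\I$; therefore $f-\nf_{\L}(f,F)\in\I$, and moreover every such step is also a legal conventional reduction step because $\LM(g)\mid_{\L}$ a monomial of the current remainder forces $\LM(g)\mid$ that monomial. Second, since $F$ is a \Gr basis, $\langle\LM(F)\rangle=\LM(\I)$.

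The central step is to show that $r:=\nf_{\L}(f,F)$ is conventionally irreducible modulo $F$, i.e.\ that no monomial occurring in $r$ lies in $\langle\LM(F)\rangle$. Suppose, for contradiction, that some monomial $m$ appearing in $r$ satisfies $m\in\langle\LM(F)\rangle=\LM(\I)$. Then there is a polynomial in $\I$ whose leading monomial is $m$, so Definition~\ref{def_invbase} supplies $g\in F$ with $\LM(g)\mid_{\L} m$. But then $r$ would be ${\L}$-reducible, contradicting that $r$ is an ${\L}$-normal form. Hence $r$ is conventionally irreducible modulo $F$.

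Finally, since $f-r\in\I$ and $r$ is conventionally irreducible modulo the \Gr basis $F$, the uniqueness of remainders on division by a \Gr basis (cf.~\cite{Becker}) forces $r=\NF(f,F)$, giving the asserted equality for every normal form algorithm. I expect the only delicate point to be the central step: it rests on upgrading ``$m\in\langle\LM(F)\rangle$'' to ``$m$ has an ${\L}$-divisor in $\LM(F)$'', which is precisely where the involutive basis property is used rather than the bare \Gr property, through the identity $\langle\LM(F)\rangle=\LM(\I)$.
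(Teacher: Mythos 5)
Your proposal is correct and takes essentially the same route as the paper: the paper disposes of this proposition in one line, calling it an obvious consequence of Definition~\ref{def_invbase} together with the fact that involutive divisibility implies conventional divisibility, and these are exactly the two ingredients your argument is built on. You have merely made explicit the details the paper leaves tacit, namely that every monomial of the ${\L}$-normal form lying in $\langle\LM(F)\rangle=\LM(\li F\ri)$ would admit an ${\L}$-divisor in $\LM(F)$, and that uniqueness of remainders modulo a \Gr basis then forces the two normal forms to coincide.
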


\noindent
This proposition immediately implies that the conditions in Theorem \ref{g2vthm} can be rewritten in terms of involutive reductions.

\begin{cor}
\label{corgerdt}
Let $G$ be an involutive basis for $\li f_{i+1},\dots,f_k\ri$ where $1\leq i\leq k-1$. Let  $G'$ be the set of all labelled polynomials computed by any signature-based algorithm (like {\sc InvComp} algorithm) for computing an involutive basis for $\li f_{i},\dots,f_k\ri$. Then $G'$ is an involutive basis for $\li f_{i},\dots,f_k\ri$, if for each $(m\cdot \mathbf{e}_i,f,g,V)\in G'$ and each $x\in NM_{\L}(\LM(f),\LM(G'))$ one of the following conditions holds:
\begin{enumerate}
\item  $(x\cdot m\cdot \mathbf{e}_i,x\cdot f,g,V)$ reduces to zero on ${\L}$-regular top-division by $G'$.
\item  $x\cdot m$ has an ${\L}$-divisor in $\LM(G)$.
\item $(t\cdot m,t\cdot f)$ reduces to $(t\cdot m,g)$ on the ${\L}$-regular top-division by $G'$ so that $(t\cdot m,g)$ is no longer ${\L}$-regular top-reducible, and $(t\cdot m,g)$ is ${\L}$-super top-reducible by $G'$.
\item $m'\mid_{\L} t\cdot m$ where  $m'$ is the signature of a labelled polynomial $(m',0)$ obtained in the computation of $G'$.
\end{enumerate}
\end{cor}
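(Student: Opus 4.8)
The plan is to deduce the corollary from Theorem \ref{g2vthm} together with the involutive basis characterization of Theorem \ref{blin}, using as a bridge the fact recorded in Proposition \ref{thmgerdt} that $\L$-divisibility always implies ordinary divisibility, so that every $\L$-reduction is a particular conventional reduction and every involutive basis is a \Gr basis. First I would observe that the four hypotheses of the corollary are exactly the involutive transcription of the four conditions of Theorem \ref{g2vthm}: the J-pair $(t\cdot m,t\cdot f)$ is replaced by a nonmultiplicative prolongation $x\cdot f$ with $x\in NM_\L(\LM(f),\LM(G'))$, ordinary divisibility by $\mid_\L$, and ordinary regular/super top-reduction by its $\L$-regular/$\L$-super counterpart.

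Next I would verify that checking the hypotheses on all nonmultiplicative prolongations of $G'$ is enough to check the hypotheses of Theorem \ref{g2vthm} on all J-pairs. Since $\L$ is Noetherian and constructive \cite{gerdt0,alex}, processing only nonmultiplicative prolongations suffices to complete to involution, and every J-pair is covered by such a prolongation; moreover each involutive condition $(1)$--$(4)$ implies the corresponding conventional condition of Theorem \ref{g2vthm}, because $\mid_\L$ implies $\mid$. Applying Theorem \ref{g2vthm} then gives that $T=\poly(G')$ is a \Gr basis of $\li f_i,\ldots,f_k\ri$. To upgrade this to an involutive basis I would appeal to Theorem \ref{blin}: it suffices to show $\nf_\L(x\cdot f,\poly(G'))=0$ for every prolongation. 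As in the proof of Theorem \ref{g2vthm}, each of the cases $(1)$--$(4)$ furnishes a standard representation of $x\cdot f$ w.r.t. $\poly(G')$, whence its conventional normal form vanishes.

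The main obstacle is the final passage from the conventional normal form to the $\L$-normal form, since Proposition \ref{thmgerdt} is stated for an already \emph{completed} involutive basis and cannot be applied verbatim while $G'$ is still being built. To resolve this I would invoke the G\,$^2$V selection strategy exactly as in the proof of Lemma \ref{buch}: because prolongations are processed in order of increasing signature and $Q$ is extended as prescribed in {\sc InvComp} and {\sc RegNormalForm}, the current set $\poly(T)$ is \emph{partially involutive} up to the monomial $\LM(x\cdot f)$. In this regime a localized form of Proposition \ref{thmgerdt} holds, so the conventional and $\L$-normal forms agree below $\LM(x\cdot f)$, and the vanishing of the former forces $\nf_\L(x\cdot f,\poly(G'))=0$. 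Making this partial-involutivity localization precise is the one step requiring genuine care; granting it, the corollary follows at once by combining Theorem \ref{g2vthm} with Theorem \ref{blin}.
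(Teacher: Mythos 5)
Your proposal is sound in substance, but note that the paper itself offers no real proof of this corollary: it simply remarks that Proposition \ref{thmgerdt} ``immediately implies'' that the conditions of Theorem \ref{g2vthm} can be rewritten in terms of involutive reductions, and moves on. What you do differently is to rebuild an actual argument: the Gr\"obner property via Theorem \ref{g2vthm}, then the upgrade to involutivity via Theorem \ref{blin}, with standard representations extracted case by case as in the proof of Theorem \ref{g2vthm}, and with the passage from conventional to ${\L}$-normal forms handled by the partial-involutivity argument under the G\,$^2$V selection strategy. This is exactly the mechanism the paper deploys in Lemma \ref{buch} and in the correctness proof of Theorem \ref{main}, so your route makes explicit what the paper leaves implicit; in particular, your observation that Proposition \ref{thmgerdt} cannot be applied verbatim to $G'$ (which is not yet known to be involutive) identifies precisely the circularity that the paper's one-line justification glosses over. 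That is what your approach buys: rigor at the point where the published text is thinnest.

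One step of yours would not survive scrutiny as written: the claim that ``every J-pair is covered by such a prolongation,'' invoked so that Theorem \ref{g2vthm} can be applied as a black box. A J-pair $(t\cdot m,t\cdot f)$ is in general not a nonmultiplicative prolongation ($t$ may involve multiplicative variables of $\LM(f)$, or higher powers of variables), and the passage from prolongation-completeness to J-pair or $S$-polynomial completeness is itself a theorem --- essentially Theorem \ref{blin} combined with the fact that an involutive basis is a Gr\"obner basis --- not a combinatorial covering that Noetherianity and constructivity hand you directly. Fortunately the step is redundant in your own architecture: once you have $\nf_{\L}(x\cdot f,\poly(G'))=0$ for all nonmultiplicative prolongations (your standard-representation plus partial-involutivity argument), Theorem \ref{blin} yields involutivity, and the Gr\"obner property follows for free. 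Drop the J-pair detour and your proof stands on its own, modulo the partial-involutivity localization that you have already, correctly, flagged as the step requiring genuine care.
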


\begin{thm}
\label{main}
  The {\sc InvComp} algorithm outputs a minimal involutive basis of the polynomial ideal generated by the input polynomial set.
\end{thm}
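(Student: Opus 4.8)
The plan is to establish three things in turn: termination, correctness (that the output is indeed an $\L$-basis), and minimality. For termination, I would invoke Noetherianity and constructivity of the division $\L$, together with the termination condition recorded in Remark~\ref{termination} (compatibility of $\prec$ with $<$, which the admissible divisions we use satisfy). The key observation is that the \textbf{while}-loop processes quadruples strictly by increasing signature under the G$^2$V selection strategy, and each surviving nonzero reduction enlarges $\LM(\poly(T))$; since only finitely many new leading monomials can appear before the involutive monomial ideal stabilizes (Proposition~\ref{jan}), and the bookkeeping set $NM_\L(q)$ prevents any nonmultiplicative prolongation from being queued twice (line~22 of {\sc InvComp}), only finitely many quadruples can be produced.

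For correctness, the heart of the argument is Corollary~\ref{corgerdt}, which translates the G$^2$V form of the F$_5$ criterion (Theorem~\ref{g2vthm}) into involutive language. I would argue that upon termination, for every $q\in T$ and every $x\in NM_\L(\LM(\poly(q)),\LM(\poly(T)))$, the prolongation $x\cdot\poly(q)$ falls into one of the four cases of Corollary~\ref{corgerdt}. Cases (2) and (4) are exactly the situations detected by the subalgorithm {\sc Criteria}: the \textbf{for}-loop over $ArxivLM[j]$ with $j>i$ captures case~(2) via the F$_5$ criterion, while the equality test $\frac{\LM(\poly(p))}{\LM(\poly(q))}\sig(q)=\sig(p)$ and the Boolean Buch$(p,q)$ (justified by Lemma~\ref{buch}) handle the remaining syzygy-type deletions. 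A prolongation not discarded by a criterion is passed to {\sc RegNormalForm}, whose output $h$ is either $0$ (case~(1), or case~(3)/(4) via super top-reduction) or a genuinely new polynomial added to $T$ in lines~16--20, which then generates fresh prolongations in the \textbf{for}-loop. Thus every nonmultiplicative prolongation is eventually either reduced to zero, discarded by a valid criterion, or represented by a new element of $T$; by Theorem~\ref{blin} together with Corollary~\ref{corgerdt}, $\poly(T)$ is an $\L$-basis of $\li F\ri$. Here I would also need the partial-involutivity remark invoked in the proof of Lemma~\ref{buch}: that under the G$^2$V ordering the conventional and $\L$-normal forms of the processed polynomial agree, so that a standard representation (Gr\"obner sense) yields a zero $\L$-normal form.

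For minimality, the output is $G=${\sc MinBas}$(\poly(T),\L,\prec)$, so I would verify that {\sc MinBas} returns a minimal $\L$-basis. By Lemma~\ref{lem1} it suffices to check that $\LM(G)$ is a minimal monomial involutive basis, which follows because {\sc MinBas} retains a leading monomial only when it is \emph{not} $\L$-divisible by the leading monomial of an already-selected element (line~6), while the selection of divisor-free monomials (lines~1,4) guarantees no element needed for the completion is dropped; uniqueness of the minimal $\L$-basis then gives the result.

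\medskip

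I expect the main obstacle to be the correctness step, specifically justifying that the union of the F$_5$ criterion and the involutive Buchberger criteria $C_1,C_2$ is \emph{safe}, i.e.\ that discarding a prolongation by {\sc Criteria} never removes a polynomial genuinely needed for involutivity. The subtlety is that the F$_5$ criterion was proved (Theorem~\ref{g2vthm}, Corollary~\ref{corgerdt}) for the incremental G$^2$V computation, whereas {\sc InvComp} is \emph{nonincremental}: it interleaves prolongations across different signature indices rather than completing $\li f_{i+1},\dots,f_k\ri$ before starting on $\li f_i,\dots,f_k\ri$. The crux will be to argue that the global variable $ArxivLM$ faithfully records, at the moment $p=(m\cdot\mathbf{e}_i,\dots)$ is processed, enough of the leading-monomial ideal of the higher-index part $\li f_{i+1},\dots,f_k\ri$ for the test in {\sc Criteria} to coincide with condition~(2) of Corollary~\ref{corgerdt}; this is where the G$^2$V selection strategy (processing maximal index, hence smallest signature, first) is essential, since it ensures all relevant higher-index leading monomials have already been entered into $ArxivLM$ before they are needed.
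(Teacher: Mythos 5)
Your overall architecture (safety of the criteria via Lemma~\ref{buch} and Corollary~\ref{corgerdt}, minimality via Lemma~\ref{lem1} and {\sc MinBas}) matches the paper's, but your proof has a genuine gap in the termination argument. You claim that each nonzero output of {\sc RegNormalForm} enlarges $\LM(\poly(T))$ and that only finitely many new leading monomials can appear before "the involutive monomial ideal stabilizes" (citing Proposition~\ref{jan}). Noetherianity/Dickson's lemma only bounds the number of additions whose leading monomial lies \emph{outside} the current monomial ideal $\langle \LM(\poly(T))\rangle$. But {\sc RegNormalForm} performs only signature-allowed (regular) reductions: when a head reduction is blocked by the signature condition, the partially reduced polynomial is pushed into $Q$ (line 14) and may later be inserted into $T$ with a leading monomial that already lies in $\langle \LM(\poly(T))\rangle$, or is even $\L$-divisible by an existing leading monomial. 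Such insertions do not change the monomial ideal, so your stabilization argument says nothing about how many of them can occur. The paper closes exactly this hole by splitting the additions into those that enlarge $\langle U\rangle$ (finitely many by Dickson's lemma) and those that preserve it, and bounding the latter by the cardinality of the minimal Thomas completion $\bar{U}_{\T}$ given by (\ref{T-completion}), transferred to an arbitrary $\sqsupset$-division via Proposition~\ref{comparison}. Without an argument of this kind, termination is not established.

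The same phenomenon also undermines your correctness step. Because of signature-blocked reductions, the terminal set $P=\poly(T)$ need not be $\L$-head-autoreduced, so you cannot conclude from "every prolongation was processed" that Theorem~\ref{blin} applies to $P$ directly; moreover, you explicitly flag the nonincremental use of Corollary~\ref{corgerdt} (whose hypothesis is an involutive basis of $\li f_{i+1},\dots,f_k\ri$, which {\sc InvComp} never fully computes) as the crux, but you leave it unresolved. The paper's proof supplies the missing device: it establishes the loop invariant (\ref{ideal}) and the signature inequality (\ref{sig_ineq}) (which forces the leading monomials in $T$ to be pairwise distinct), then passes to the $\L$-head autoreduction $\tilde{P}$ of $P$, proves the inclusion $\LM(\tilde{P})\subseteq\LM(P)$ of (\ref{inclusion}) using the fact that every elementary head reduction of an element of $T$ is re-queued in line 26 of {\sc InvComp}, and invokes the third axiom of Definition~\ref{invdiv} to conclude that $\tilde{P}$ is an involutive basis; this yields (\ref{week_inv}) and hence that $P$ itself is an involutive basis. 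Your sketch contains neither the autoreduction device nor the distinctness argument, so the correctness part remains a plan rather than a proof.
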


\begin{pf}
{\em Correctness}. Lemma \ref{buch} and Corollary \ref{corgerdt} guarantee the correctness of subalgorithm {\sc Criteria} invoked in line 6 of the subalgorithm {\sc RegNormalForm} when the condition of the {\bf if}-statement (line 4) is true. If this condition is false on account of the signature relation, then all possible intermediate results of the ${\L}$-reduction chain are inserted into $Q$ (line 14). Thereby, the full involutive normal form of the input polynomial $h$ (line 1) modulo $T$ is to be eventually computed and inserted into $T$ whenever this normal form is non-zero.  Apparently, the ideal generated by polynomials in $T\cup Q$ is the loop invariant
\begin{equation}
{\I}:=\langle F \rangle = \langle \,\poly(T\cup Q)\,\rangle\,.
\label{ideal}
\end{equation}

If the algorithm {\sc InvComp} terminates, then $Q=\emptyset$ and all the ${\L}$-nonmultiplicative prolongations of polynomials in $T$ constructed in line 21 have already been processed. Thus, $U:= \LM(\poly(T))$ is ${\L}$-involutive. Moreover, because of the ordering of the input polynomials and the selection strategy for an element in $Q$ to be processed (line 7 of {\sc InvComp}), the elements in $U$ are distinct monomials. Indeed, this selection and the enlargement of $Q$ done in line 26 of {\sc InvComp} and in line 14 of {\sc RegNormalForm} imply
\begin{equation}
  (\,\forall q\in Q\,)\ (\,\forall t\in T\,)\ [\,\sig(q)\geq \sig(t)\,]\,. \label{sig_ineq}
\end{equation}

Therefore, if $\LM(\poly(p))=\LM(\poly(t))$, where $p$ is the quadruple selected in line 7 of {\sc InvComp} and $t\in T$, the ${\L}$-head reduction of $\poly(p)$ by $\poly(t)$ is allowed in {\sc RegNormalForm} since  the {\bf if}-condition of line 4 in the last subalgorithm is true.

\noindent
Now we show that $\langle U\rangle$ generates the leading monomial ideal of $\I$, i.e.
\begin{equation}
\langle U\rangle = \LM(\I).
\label{week_inv}
\end{equation}

Let $P:=\poly(T)$ be the intermediate polynomial set contained in $T$ directly before a run of the {\bf while}-loop and let $\tilde{P}$ denotes the polynomial set obtained by the ${\L}$-head autoreduction of $P$. We claim that

\begin{equation}
\LM(\tilde{P})\subseteq \LM(P)\,.
\label{inclusion}
\end{equation}

To prove it, we note first that, in accordance to the initiation step 2, the inclusion (\ref{inclusion}) holds trivially before the very first run of the loop. Then, every enlargement of $H:=\poly(T)$ with $h$ done in line 16 or in line 18 of {\sc InvComp} is attended with insertion of every possible non-zero polynomial obtained by the elementary ${\L}$-head reduction modulo $h$ of a polynomial in $T$ into the polynomial part of $Q$. This insertion is done in line 26 of the {\bf for}-loop (lines 20-29). If such a new element added to $Q$ will again ${\L}$-reduce a polynomial in $T$ at the stage of its selection in line 8, then this will again lead to an extension of ${\poly(Q)}$ with the result of the corresponding (non-zero) elementary reduction, and so on. Finally, after completion of the {\bf while}-loop, for every polynomial $h$ in $H$ its ${\L}$-head normal form will be an element in $H$. This proves the claim.

Now, by the third condition in Definition \ref{inv_div}, a polynomial $f\in\tilde{P}$ cannot give rise  to new ${\L}$-nonmultiplicative variables as a result of ${\L}$-head autoreduction of $P$. Therefore, all nonmultiplicative prolongations of $f$ are ${\L}$-reduced to zero modulo $\tilde{P}$. It follows that $\LM(\tilde{P})$ is an ${\L}$-autoreduced monomial set and $\tilde{P}$ is an involutive basis of  (\ref{ideal}) (cf.~\cite{gerdt0}). This implies the equality (\ref{week_inv}) and shows that $P$ is also an involutive basis of $\I$.

\noindent
Finally, by Lemma \ref{lem1}, the subalgorithm {\sc MinBas} invoked in line 32 of {\sc InvCom} returns a minimal involutive basis as a subset of its input involutive basis.

\noindent
{\em Termination}. First, we note that termination of ${\L}$-reduction and termination of the subalgorithm {\sc Criteria} provide termination of the subalgorithm {\sc RegNormalForm}. Second, in the course of algorithm {\sc InvComp} the intermediate set $T$ can only be enlarged by the insertion of new elements in line 16 or 18. In doing so, the cardinality of the set $Q$ is obviously bounded at every intermediate step of the algorithm. The repeated processing of nonmultiplicative prolongations is excluded by means of the set $NM_{\L}(q)$ associated to every polynomial $q\in \poly(T)$ and used in the {\bf for}-statement of line 20. Recall that $NM_{\L}(q)$ contains all those variables $x\in NM_{\L}(q,\poly(T))$ for which $x\cdot q$ has been already processed. Thus, to prove the termination of the algorithm, it suffices to show that the cardinality of $T$ is bounded, that is, the cardinality of the leading monomial set $U:=\LM(P)$ where $P:=\poly(T)$ is bounded.

There are three alternative variants for the completion of $U$ with $u:=\LM(h)$ where $h:=${\sc RegNormalForm}$(p,T,{\L},\succ)$ and $p\in Q$ is the quadruple selected in line 7 of {\sc InvComp}:
\begin{enumerate}
\item Either $u$ has no ${\L}$-divisors in $U$ or $u$ is ${\L}$-reducible modulo $U$ but the reduction is not allowed by the signature condition (line 4 in {\sc RegNormalForm}).
\item $u$ is ${\L}$-reducible modulo $U$ and $h$ is obtained from $\poly(p)$ by its {\em partial} ${\L}$-head reduction modulo $P$ such that at least one head reduction has been performed. Thus, $\LM(h)\prec \LM(\poly(p)$ and there is $q\in P$ such that $\LM(q)\mid_{\L} u$ but the ${\L}$-head reduction of $h$ by $q$ is not allowed in {\sc RegNormalForm} by the signature condition.
\item $h$ is the {\em full} ${\L}$-head normal form of $\poly(p)$ modulo $P$ and $h\neq 0$.
\end{enumerate}

\noindent
There are finitely many cases to complete $U$ (into a set, say $\bar{U}$) by either  the monomials obtained by processing of the input polynomials or by the monomials which are not in $\langle U\rangle$. The number of last monomials is finite by virtue of Dickson's lemma~\cite{Becker}, and they can only occur in the 3rd of the above variants.

Therefore, it remains to show that there cannot be infinitely many completions of $U$ preserving $\langle U\rangle$ in the case when elements in $Q$ are either nonmultiplicative prolongations of the polynomials in $P$ or
${\L}$-head reductions of such prolongations or (if ${\L}$ is a $\sqsupset$-division with non-admissible $\sqsupset$, e.g. antigraded~\cite{CLO:2005:UAG}) ${\L}$-head autoreductions of the polynomials in $P$. For the Thomas division, the maximal possible number of completions of $U$ is obviously bounded by the cardinality of $\bar{U}_{\T}$, the minimal Thomas completion of $U$ given by (\ref{T-completion}). If ${\L}$ is a $\sqsupset$-division, then, by Proposition \ref{comparison}, the total number of completions also cannot exceed the cardinality of $\bar{U}_{\T}$ in (\ref{T-completion}).
\end{pf}

\begin{cor}
If the input involutive division in algorithm {\sc InvComp} is either Thomas division or $\sqsupset$-division with admissible $\sqsupset$, then the lines 23-28 in the algorithm can be omitted.
\label{cor_mult}
\end{cor}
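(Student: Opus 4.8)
The plan is to show that, for the Thomas division and for any $\sqsupset$-division with admissible $\sqsupset$, the {\bf if}-condition on line 23 of {\sc InvComp} is never satisfied by a quadruple $q\in T$ whose leading monomial differs from $\LM(h)$. Since the whole block of lines 23--28 lies inside this conditional, it would then never enlarge $Q$, so the algorithm behaves exactly as if these lines were absent; correctness and termination would then follow verbatim from Theorem \ref{main}. Thus everything reduces to a single $\L$-divisibility impossibility for the newly inserted polynomial $h$, and this impossibility is what I would establish.

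First I would fix the moment at which $h$ is inserted into $T$ (line 16 or 18), and recall that the leading monomials of the elements of $T$ are pairwise distinct throughout the {\bf while}-loop (this is the invariant used in the correctness part of Theorem \ref{main}, which depends only on the G$^2$V selection strategy and on (\ref{sig_ineq}), not on lines 23--28). Hence $\LM(\poly(q))\neq \LM(h)$ for every $q\in T$ with $q\neq h$; the case $q=h$ gives $u=1$ and $q-u\cdot h=0$, which is excluded by the test on line 25. So I would assume toward a contradiction that $\LM(h)\mid_{\L}\LM(\poly(q))$ for some such $q$, and write $\LM(\poly(q))=w\cdot\LM(h)$ with $w\neq 1$ a product of $\L$-multiplicative variables of $\LM(h)$.

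The core step is to contradict the multiplicativity of $w$ in each division. For the Thomas division, every variable $x$ dividing $w$ satisfies $\deg_x(\LM(h))<\deg_x(\LM(\poly(q)))$, so by (\ref{Thomas}) it lies in $NM_{\T}(\LM(h),\{\LM(h),\LM(\poly(q))\})$, hence in $NM_{\T}(\LM(h),\LM(\poly(T)))$ by (\ref{pair}); this contradicts $x\mid w$ being multiplicative. For the admissible $\sqsupset$-division, admissibility of $\sqsupset$ together with $w\neq 1$ forces $\LM(\poly(q))=w\cdot\LM(h)\sqsupset \LM(h)$, and since $\LM(h)$ properly divides $\LM(\poly(q))$ it is not the case that $\LM(\poly(q))\mid \LM(h)$; hence the "else" branch of (\ref{inv_div}) applies and places the variable $x_{\sigma(\iota)}$ with $\iota=\min\{j\mid \deg_{\sigma(j)}(\LM(h))<\deg_{\sigma(j)}(\LM(\poly(q)))\}$ in $NM_{\sqsupset}(\LM(h),\{\LM(h),\LM(\poly(q))\})$, and this variable divides $w$ — again contradicting multiplicativity. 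Either way the supposed $\L$-divisibility cannot occur, which is exactly what is needed.

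I expect the only delicate point to be the bookkeeping around which monomial set governs the multiplicativity test: the nonmultiplicative variable produced above is detected from the pair $\{\LM(h),\LM(\poly(q))\}$, and I must invoke (\ref{pair}) to promote it to a nonmultiplicative variable with respect to the full set $\LM(\poly(T))$ (this is where the Thomas and $\sqsupset$-divisions, both satisfying (\ref{pair}), are used uniformly). It is also worth recording why admissibility is essential and non-admissible $\sqsupset$ is excluded: if $\sqsupset$ is the inverse of an admissible ordering, then $w\cdot\LM(h)\sqsupset\LM(h)$ may fail, so $\LM(h)$ can become the $\sqsupset$-larger monomial of the pair and acquire $w$ entirely among its multiplicative variables, whence the head-autoreduction of lines 23--28 genuinely fires. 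This is consistent with the termination analysis of Theorem \ref{main}, where $\L$-head autoreductions are listed as a source of $\langle U\rangle$-preserving completions precisely in the non-admissible case.
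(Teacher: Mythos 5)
Your proof is correct and takes essentially the same route as the paper's: the heart of both arguments is that a proper divisor of a monomial in $\LM(\poly(T))$ can never be its $\L$-divisor, deduced for the Thomas division from (\ref{Thomas}) and for an admissible $\sqsupset$-division from the else-branch of (\ref{inv_div}) (using $w\cdot\LM(h)\sqsupset\LM(h)$), with (\ref{pair}) promoting the pairwise nonmultiplicative variable to the full set. You merely make explicit some bookkeeping the paper leaves implicit (the pairwise-distinctness of leading monomials in $T$ and the trivial case $q=h$ being killed by the test in line 25), which is a refinement of detail, not a different argument.
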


\begin{pf}
Let $U:=\LM(\poly(T))$ where $T$ is the intermediate set of quadruples in algorithm {\sc InvComp} and let $u,v\in U$ be two monomials such that $u$ is a proper divisor of $v$. From (\ref{pair}) and (\ref{Thomas}) it follows immediately that $u$ cannot $\T$-divide $v$ since there is a variable $x\mid v$ such that $\deg_x(u)<\deg_v(v)$ and, hence $x$ is $\T$-nonmultiplicative for $u$. Consider now $\sqsupset$-division with admissible $\sqsupset$. In this case, $u\sqsubset v$ and the variable $x_{\sigma(i)}$ specified in (\ref{inv_div}) is nonmultiplicative for $u$. Therefore, for both $\T$-and $\sqsupset$-divisions $u$ cannot divide $v$ involutively.
\end{pf}

\begin{cor}
If the input involutive division ${\L}$ in algorithm {\sc InvComp} is $\sqsupset$-division generated by  the total monomial ordering $\sqsupset$ which is {\em antigraded} \cite{CLO:2005:UAG}, then to obtain a minimal ${\L}$-basis from the output $T$ of the {\bf while}-loop in the main algorithm {\sc InvComp} one can perform ${\L}$-head autoreduction of $\poly(T)$.
\end{cor}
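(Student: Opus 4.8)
The plan is to reduce the statement to the level of leading monomials and then to exploit the fact that, for an antigraded $\sqsupset$, the relation $u\sqsubset w$ carries the degree information $\deg(w)\le\deg(u)$. By Lemma \ref{lem1} it is enough to prove that the $\L$-head autoreduction $\tilde{P}$ of $\poly(T)$ satisfies $\LM(\tilde{P})=\bar{U}_{\sqsupset}$, the minimal monomial involutive basis of $\LM(\I)$. First I would borrow from the reasoning in the proof of Theorem \ref{main} the two facts established there for a head autoreduction: that $\tilde{P}$ is again an involutive basis of $\I$ with $\LM(\tilde{P})\subseteq\LM(\poly(T))$ (cf. (\ref{inclusion})), and that $\LM(\tilde{P})$ is $\L$-autoreduced, i.e. no monomial of $\LM(\tilde{P})$ involutively divides another. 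Since $\tilde{P}$ is an involutive basis, Definition \ref{def_min} gives $\bar{U}_{\sqsupset}\subseteq\LM(\tilde{P})$, so only the reverse inclusion remains.

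The crux is that reverse inclusion, and this is where antigradedness is used. Assume $\LM(\tilde{P})\setminus\bar{U}_{\sqsupset}\ne\emptyset$ and choose $v$ in this set of least total degree. As $\bar{U}_{\sqsupset}$ is involutive and $v\in\LM(\I)$, there is $u\in\bar{U}_{\sqsupset}$ with $u\mid_{\L}v$ modulo $\bar{U}_{\sqsupset}$; since $v\notin\bar{U}_{\sqsupset}$ we have $u\ne v$, so $u$ is a proper divisor of $v$ and $\deg(u)<\deg(v)$. Now $u\in\bar{U}_{\sqsupset}\subseteq\LM(\tilde{P})$, and because $\LM(\tilde{P})$ is $\L$-autoreduced, $u$ cannot be an $\L$-divisor of $v$ relative to the larger set $\LM(\tilde{P})$. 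Hence some variable $x$ dividing $v/u$ is $\sqsupset$-nonmultiplicative for $u$ modulo $\LM(\tilde{P})$; by (\ref{pair})--(\ref{inv_div}) this produces $w\in\LM(\tilde{P})$ with $x\in NM_{\sqsupset}(u,\{u,w\})$, which forces $u\sqsubset w$. The antigraded hypothesis now delivers the decisive estimate $\deg(w)\le\deg(u)<\deg(v)$.

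It then remains to close the two cases for $w$. If $w\notin\bar{U}_{\sqsupset}$, then $w$ lies in $\LM(\tilde{P})\setminus\bar{U}_{\sqsupset}$ and has strictly smaller degree than $v$, contradicting the minimal choice of $v$. If $w\in\bar{U}_{\sqsupset}$, then $x\in NM_{\sqsupset}(u,\{u,w\})\subseteq NM_{\sqsupset}(u,\bar{U}_{\sqsupset})$ by (\ref{pair}), so $x$ is nonmultiplicative for $u$ already modulo $\bar{U}_{\sqsupset}$; but $x$ divides $v/u$ and $u\mid_{\L}v$ modulo $\bar{U}_{\sqsupset}$, which requires $x$ to be multiplicative there, a contradiction. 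Either way the assumption collapses, so $\LM(\tilde{P})=\bar{U}_{\sqsupset}$, and Lemma \ref{lem1} yields that head autoreduction returns a minimal $\L$-basis. I expect the genuine difficulty to be precisely the set-dependence of involutive multiplicativity: one cannot chain $u\mid_{\L}v$ across the two different sets $\bar{U}_{\sqsupset}$ and $\LM(\tilde{P})$ directly, and indeed the analogous conclusion is false for admissible $\sqsupset$ (cf. Corollary \ref{cor_mult}). The degree-minimality device is what converts the order relation $u\sqsubset w$ into a usable degree drop and thereby breaks this circularity, so isolating that device is the heart of the proof.
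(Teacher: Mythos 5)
Your proof is correct, but it follows a genuinely different route from the paper, which offers no internal argument at all: the paper's entire proof is the citation ``See the proof in \cite{alex} of Theorem 2 and Corollary 2'', i.e.\ the statement is outsourced to the earlier Gerdt--Blinkov work on the $\succ_{\alex}$-division. You instead build a self-contained argument inside this paper's framework: you borrow from the proof of Theorem \ref{main} the facts that the ${\L}$-head autoreduction $\tilde{P}$ of $\poly(T)$ is again an involutive basis of ${\I}$ and that $\LM(\tilde{P})$ is ${\L}$-autoreduced, reduce minimality to the monomial level via Lemma \ref{lem1} and Definition \ref{def_min}, and then run a degree descent: a counterexample $v\in \LM(\tilde{P})\setminus \bar{U}_{\sqsupset}$ of least total degree has a proper involutive divisor $u\in\bar{U}_{\sqsupset}$; autoreducedness of $\LM(\tilde{P})$ forces, via (\ref{pair})--(\ref{inv_div}), a witness $w\in\LM(\tilde{P})$ with $u\sqsubset w$; and antigradedness turns this into $\deg(w)\le\deg(u)<\deg(v)$, after which both cases ($w\notin\bar{U}_{\sqsupset}$ contradicts minimality of $\deg(v)$; $w\in\bar{U}_{\sqsupset}$ contradicts $u\mid_{\L}v$ modulo $\bar{U}_{\sqsupset}$) close. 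I checked the steps and they are sound; in particular you correctly respect the set-dependence of multiplicativity, never transporting $u\mid_{\L}v$ between the sets $\bar{U}_{\sqsupset}$ and $\LM(\tilde{P})$, which is exactly where a careless argument would fail. Your route buys self-containedness and a precise localization of the antigraded hypothesis, and it explains why the conclusion is special to antigraded orderings: for the Janet ($\succ_{\lex}$-) division, $\{x,xy\}$ is an involutively head-autoreduced Janet basis of $\langle x\rangle$ that is not minimal. The paper's route buys brevity, at the price of sending the reader to \cite{alex}. One small inaccuracy outside your logical chain: you point to Corollary \ref{cor_mult} as evidence that the claim fails for admissible $\sqsupset$, but that corollary concerns omitting lines 23--28 of {\sc InvComp}, not head autoreduction; the failure is real but needs a separate example such as the one just given.
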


\begin{pf}
See the proof in \cite{alex} of Theorem 2 and Corollary 2.
\end{pf}

We give now a simple example illustrating the behavior of algorithm {\sc InvComp} for the Janet division.
\begin{exmp}
  Let $f_1=x^2-3/2 y^2,f_2=2 x y+3 y^2$, $F=\{f_1,f_2\}\subset K[x,y]$ and $y\prec_{\lex}x$. Let $p_1=({\bf e}_1,x^2-3/2 y^2,f_1,\{~\})$ and $p_2=({\bf e}_2,2 x y+3 y^2,f_2,\{~\})$. Then, {\it ArxivLM}$=[[x^2],[xy]]$, $T=\{p_2\}$ and $Q=\{p_1\}$. We select and remove $p_1$ from $Q$. \\
$\Rightarrow ${\sc RegNormalForm}$(p_1,T,{\L},\prec)=x^2-3/2y^2$\\
$\Rightarrow T=\{p_2,p_1\}$\\
$\Rightarrow p_3=(x\cdot {\bf e}_2,x (2 x y+3y^2),f_2,\{~\})$\\
$\Rightarrow Q=\{p_3\}$\\
$\Rightarrow $ we select and remove $p_3$ from $Q$, and its normal form modulo $T$ is $2x^2y-9/2y^3$\\
$\Rightarrow T=\{p_2,p_1,p_3\}$ where $p_3=(x\cdot {\bf e}_2,2 x^2 y-9/2 y^3,f_2,\{~\})$\\
$\Rightarrow${\it ArxivLM}$=[[x^2],[x y,x^2 y]]$\\
$\Rightarrow p_4=(y\cdot {\bf e}_1,y (x^2-3/2 y^2),f_1,\{~\})$\\
$\Rightarrow Q=\{p_4\}$\\
$\Rightarrow $ we select and remove $p_4$ from $Q$, and its normal form modulo $T$ is $3/4y^3$\\
$\Rightarrow T=\{p_2,p_1,p_3,p_4\}$ where $p_4=(y\cdot {\bf e}_1,3/4 y^3,f_1,\{~\})$ \\
$\Rightarrow${\it ArxivLM}$=[[x^2,y^3],[x y,x^2 y]]$\\
$\Rightarrow p_5=(x\cdot y\cdot {\bf e}_1,x (3/4 y^3),f_4,\{~\})$\\
$\Rightarrow Q=\{p_5\}$\\
$\Rightarrow $ we select and remove $p_5$ from $Q$\\
$\Rightarrow $ {\sc Criteria}$(p_5,p_2)=$true, because $xy\in \li${\it ArxixLM}$[2]\ri$, and we remove $p_5$ by F$_5$ criterion\\
$\Rightarrow Q=\{~\}$\\
$\Rightarrow G=\{p_2,p_1\}$ is a minimal \Gr basis for $\li F\ri$\\
$\Rightarrow $ {\sc MinBas}$(G)=\{p_2,p_1,p_4\}$ is a minimal Janet basis for $\li F\ri$.
\end{exmp}

It is worth noting that in this example, we did not delete any polynomial by super top-reduction criterion (see Theorem \ref{g2vthm}).

\section{Experimental results}
\label{Exp}

We have implemented in {\tt Maple} 12\footnotemark\footnotetext{The {\tt Maple} codes of our programs and examples are available at {\tt http://invo.jinr.ru/.}} the algorithm {\sc InvComp} and the improved version~\cite{gerdtnew} of {\sc GBI} algorithm. For an efficient implementation of the last algorithm in {\tt Maple} we refer to \cite{blinkov}. It is worth noting that, in the given paper, we are willing to compare the structure and behavior of algorithms {\sc InvComp} and {\sc GBI} as they are implemented on the same platform. Therefore, we do not compare our implementations with \cite{blinkov}.  For experimental comparison of behavior of these algorithms, we used some well-known examples from the collection of benchmarks \cite{Bini} that has been already widely used for verification and comparison of different software packages created for construction of Gr\"obner bases.

The results are shown in the following tables. Table 1 compares the algorithms for Janet division, i.e. the $\succ_{\lex}$-division defined by (\ref{pair})-(\ref{inv_div}) with $\sqsupset$ being the pure lexicographical monomial ordering $\succ_{\lex}$ such that $x_i\succ_{\lex}x_j$ for $i>j$ and with $\sigma$ being the identical permutation. Table 2 shows the results of comparison for $\succ_{\alex}$-division under the same ordering on the variables as for the Janet division and also for the identical permutation $\sigma$. Here $\succ_{\alex}$ is the antigraded lexicographical monomial ordering \cite{CLO:2005:UAG} for which monomials $u$ and $v$ are compared in (\ref{inv_div}) as follows
\[
u\succ_{\alex} v \Longleftrightarrow \deg(u)<deg(v)\ \vee\ deg(u) = deg(v)\ \wedge\ u\succ_{\lex}v\,.
\]

The involutive bases computation was performed on a personal computer  with  $3.2$GHz,  2$\times$Intel(R)-Xeon(TM) Quad core, $24$ GB RAM and $64$ bits running  under the Linux operating system.  All computations were done over $\mathbb{Q}$, and for the input degree-reverse-lexicographical monomial ordering.

The {\it time} (resp. {\it memo.}, {\it reds.}, $C_1$ and $C_2$) column shows the consumed CPU time in seconds (resp. the amount of megabytes of memory used, the number of zero ${\L}$-normal forms computed,  the number of polynomials removed by $C_1$ and $C_2$ criteria) by the corresponding algorithm. In the seventh column the number of polynomials eliminated by the F$_5$ criterion is given. The eighth column represents the number of polynomials eliminated by {\em super top-reduction} criterion, denoted by $S$ which is applied as follows. Let $p$ be a quadruple. If $\LM(\poly(p))$ is divisible by the leading monomial of the polynomial part of some quadruple $q$, and $\LM(\poly(p))/\LM(\poly(q))\sig(q)= \sig(p)$, then we can discard $p$ by Theorem \ref{g2vthm}. The {\em polys.} column contains the number of polynomials in the involutive basis computed by the {\bf while}-loop in {\sc InvComp} (resp. outputted by {\sc GBI}). The last column {\it deg.} shows the largest degree of polynomials processed  during computation of involutive bases.

\vskip 0.5cm
\begin{center}
{\bf Table 1.} Benchmarking of {\sc InvComp} and {\sc GBI} for Janet division
\begin{table}[ht]
\centering {\tiny
\begin{tabular}{|c||c|c|c|c|c|c|c|c|c|}
\cline{1-10}
Cyclic$5$  & time  & memo. & reds.  & $C_1$ & $C_2$ &F$_5$ &S & polys. & deg.\\
\cline{1-10}
\noalign{\vskip 2pt}
\cline{1-10}
{\sc InvComp}  & 3.08 & 26.3 & 0  & 50 & 3 & 62  &44 & 52 & 9\\
\cline{1-10}
{\sc GBI}  &   22.60 & 164.60 & 83  & 40 & 5 &- & - & 23 &8\\
\cline{1-10}
\multicolumn{1}{c}{}  \\
\cline{1-10}
Weispfenning   & time  & memo. & reds.  & $C_1$ & $C_2$ &F$_5$ &S & polys. & deg.\\
\cline{1-10}
\noalign{\vskip 2pt}
\cline{1-10}
{\sc InvComp}  & 7.82 & 66.8 & 4  & 0 & 6 & 24  &68 & 67& 15\\
\cline{1-10}
{\sc GBI}  &   20.62 & 161.1 & 29  & 0 & 9 &- & - & 34 & 14\\
\cline{1-10}
\multicolumn{1}{c}{}  \\
\cline{1-10}
Haas$3$    & time  & memo. & reds.  & $C_1$ & $C_2$ &F$_5$ &S & polys. & deg.\\
\cline{1-10}
\noalign{\vskip 2pt}
\cline{1-10}
{\sc InvComp}  & 18.56 & 161.9 & 0  & 0 & 25 & 98  &154 & 150 & 13\\
\cline{1-10}
{\sc GBI}  &   61.85 & 475.1 & 121  & 0 & 11 &- & - & 73& 12\\
\cline{1-10}
\multicolumn{1}{c}{}  \\
\cline{1-10}
Katsura$5$   & time  & memo. & reds.  & $C_1$ & $C_2$ &F$_5$ &S & polys. & deg.\\
\cline{1-10}
\noalign{\vskip 2pt}
\cline{1-10}
{\sc InvComp}  & 56.00 & 495.5 & 0  & 98 & 22 & 138  & 147 & 113& 8\\
\cline{1-10}
{\sc GBI}  & 25.52 & 207.1 & 47  & 22 & 1 &- & - & 23 & 6\\
\cline{1-10}
\multicolumn{1}{c}{}  \\
\cline{1-10}
Lichtblau    & time  & memo. & reds.  & $C_1$ & $C_2$ &F$_5$ &S & polys. & deg.\\
\cline{1-10}
\noalign{\vskip 2pt}
\cline{1-10}
{\sc InvComp}  & 229.87 & 1892.7 & 0  & 0 & 109 & 43  & 296 & 271 & 19\\
\cline{1-10}
{\sc GBI}  & $>8$ hours & ? & ?  & ? & ? &- & - &? & ?\\
\cline{1-10}
\multicolumn{1}{c}{}  \\
\cline{1-10}
Cyclic$6$   & time  & memo. & reds.  & $C_1$ & $C_2$ &F$_5$ &S & polys. &deg.\\
\cline{1-10}
\noalign{\vskip 2pt}
\cline{1-10}
{\sc InvComp}  & 405.20 & 4122.3 & 13  & 246 & 111 & 361  & 607 & 297 & 11\\
\cline{1-10}
{\sc GBI}  & 5208.32 & 89559.9 & 476  & 152 & 18 &- & - & 46 & 10\\
\cline{1-10}
\multicolumn{1}{c}{}  \\
\cline{1-10}
Katsura$6$   & time  & memo. & reds.  & $C_1$ & $C_2$ &F$_5$ &S & polys. & deg.\\
\cline{1-10}
\noalign{\vskip 2pt}
\cline{1-10}
{\sc InvComp}  & 739.80 & 5471.1 & 0  & 165 & 104 & 222  & 274 & 205 & 11\\
\cline{1-10}
{\sc GBI}  & 5168.90 & 205361.4 & 128  & 44 & 3 &- & - & 43 & 7\\
\cline{1-10}
\multicolumn{1}{c}{}  \\
\cline{1-10}
Eco$7$   & time  & memo. & reds.  & $C_1$ & $C_2$ &F$_5$ &S & polys. & deg.\\
\cline{1-10}
\noalign{\vskip 2pt}
\cline{1-10}
{\sc InvComp}  & 2492.10 & 24639.4 & 0  & 199 & 936 & 557  & 460 & 459 & 12\\
\cline{1-10}
{\sc GBI}  & 102.14 & 947.0 &124  & 46 & 40 &- & - & 45 &4\\
\cline{1-10}
\multicolumn{1}{c}{}  \\
\end{tabular}}
\label{table1}
\end{table}
\end{center}

As one can see from the column {\it reds.}, Buchberger's criteria $C_1$, $C_2$ together with the F$_5$ criterion and the super top-reduction criterion $S$ do detect the vast majority of useless zero reductions whereas the criteria $C_1$, $C_2$ do not. In so doing, for the Janet division (Table 1) only in the two examples of eight there are some undetected zero reductions whereas in the case of $\succ_{\alex}$-division (Table 2) there is one half of such examples. The price one has to pay for this extra detection in {\sc InvComp} in comparison with {\sc GBI} is a more lengthy intermediate basis $T$ (cf. the numbers in column {\it polys.}). For the Janet bases in Table 1, except two examples, this enlargement in combination with the extra detection of useless reductions leads to faster computation (column time) correlated with less memory consumed (column {\it memo.}). For the $\succ_{\alex}$-division the enlargement of the intermediate set $T$ in {\sc InvComp} and  respectively the maximal total degree of its polynomials (see Table 2, columns {\it polys.} and {\it deg.}) are substantially larger and, except one example, is not compensated (in comparison with {\sc GBI}) by the additional detection of zero reductions.

\vskip 0.5cm
\begin{center}
{\bf Table 2.} Benchmarking of {\sc InvComp} and {\sc GBI} for $\succ_{\alex}$-division
\begin{table}[ht]
\centering {\tiny
\begin{tabular}{|c||c|c|c|c|c|c|c|c|c|}
\cline{1-10}
Wang$89$   & time  & memo. & reds.  & $C_1$ & $C_2$ &F$_5$ &S & polys. & deg.\\
\cline{1-10}
\noalign{\vskip 2pt}
\cline{1-10}
{\sc InvComp}  & 11.19 & 90.6 & 0  & 0 & 0 & 66  & 67 & 67& 14\\
\cline{1-10}
{\sc GBI}  & 0.71 & 6.9 & 12  & 0 & 0 &- & - & 10 & 7\\
\cline{1-10}
\multicolumn{1}{c}{}  \\
\cline{1-10}
Cyclic$5$  & time  & memo. & reds.  & $C_1$ & $C_2$ &F$_5$ &S & polys. & deg.\\
\cline{1-10}
\noalign{\vskip 2pt}
\cline{1-10}
{\sc InvComp}  & 15.14 & 73.9 & 0  & 44 & 40 & 57  &49 & 56 & 16\\
\cline{1-10}
{\sc GBI}  &   26.25 & 177.9 & 83  & 40 & 3 &- & - & 23 & 8\\
\cline{1-10}
\multicolumn{1}{c}{}  \\
\cline{1-10}
Gerdt$2$   & time  & memo. & reds.  & $C_1$ & $C_2$ &F$_5$ &S & polys. & deg.\\
\cline{1-10}
\noalign{\vskip 2pt}
\cline{1-10}
{\sc InvComp}  & 20.48 & 145.8 & 66  & 0 & 2 & 3  & 114 & 55 & 14\\
\cline{1-10}
{\sc GBI}  & 0.56 & 4.4 & 4  & 0 & 0 &- & - & 8 & 6\\
\cline{1-10}
\multicolumn{1}{c}{}  \\
\cline{1-10}
Pavelle    & time  & memo. & reds.  & $C_1$ & $C_2$ &F$_5$ &S & polys. & deg.\\
\cline{1-10}
\noalign{\vskip 2pt}
\cline{1-10}
{\sc InvComp}  & 112.22 & 804.6 & 0  & 0 & 37 & 156  & 252 & 139 & 11\\
\cline{1-10}
{\sc GBI}  & 1.88 & 15.5 & 18  & 0 & 0 &- & - & 12 & 4\\
\cline{1-10}
\multicolumn{1}{c}{}  \\
\cline{1-10}
Trinks    & time  & memo. & reds.  & $C_1$ & $C_2$ &F$_5$ &S & polys. & deg.\\
\cline{1-10}
\noalign{\vskip 2pt}
\cline{1-10}
{\sc InvComp}  & 372.06 & 2908.5 & 3  & 94 & 377 & 139  & 536 & 224 & 13\\
\cline{1-10}
{\sc GBI}  &   28.78 & 178.5 & 16  & 26 & 112 &- & - & 38& 8\\
\cline{1-10}
\multicolumn{1}{c}{}  \\
\cline{1-10}
Weispfenning   & time  & memo. & reds.  & $C_1$ & $C_2$ &F$_5$ &S & polys. & deg.\\
\cline{1-10}
\noalign{\vskip 2pt}
\cline{1-10}
{\sc InvComp}  & 800.16 & 3898.5 & 4  & 0 & 27 & 37  &900 & 204& 22\\
\cline{1-10}
{\sc GBI}  &   432.76 & 3112.6 & 29  & 0 & 116 &- & - & 92 & 21\\
\cline{1-10}
\multicolumn{1}{c}{}  \\
\cline{1-10}
Liu   & time  & memo. & reds.  & $C_1$ & $C_2$ &F$_5$ &S & polys. & deg.\\
\cline{1-10}
\noalign{\vskip 2pt}
\cline{1-10}
{\sc InvComp}  & 4568.69 & 22815.1 & 0  & 6 & 64 & 489  & 1048 & 451 & 15\\
\cline{1-10}
{\sc GBI}  & 2.43 & 17.7 & 18  & 0 & 0 &- & - & 12 &5\\
\cline{1-10}
\multicolumn{1}{c}{}  \\
\cline{1-10}
Cyclic$6$   & time  & memo. & reds.  & $C_1$ & $C_2$ &F$_5$ &S & polys. &deg.\\
\cline{1-10}
\noalign{\vskip 2pt}
\cline{1-10}
{\sc InvComp}  & 49301.16 & 229416.4 & 76  & 430 & 1895 & 1419  & 5890 & 959 & 22\\
\cline{1-10}
{\sc GBI}  & 5184.06 & 100458.6 & 458  & 147 & 3 &- & - & 46 & 10\\
\cline{1-10}
\multicolumn{1}{c}{}  \\
\end{tabular}}
\label{table2}
\end{table}
\end{center}

Heuristically, as it was shown in~\cite{alex}, the $\succ_{\alex}$-division is not worse than the Janet division w.r.t. the number of nonmultiplicative prolongations to be processed in the course of completion to involution. Therefore, the main reason of slowdown of the algorithm {\sc InvComp} for $\succ_{\alex}$-division, as compared with the Janet division, is to be the presence in intermediate basis $T$ of the multiplicative prolongations of its elements caused by the enlargement of $Q$ done in line 26 of {\sc InvComp}. By Corollary \ref{cor_mult}, the last enlargement is not done for Janet bases.
The data in Tables 1 and 2 for examples Cyclic5 and Cyclic6 nicely illustrate this distinction in behavior of the two divisions. Both minimal Janet and $\succ_{\alex}$-bases for every of these examples have the same number of elements. At the same time the {\bf while}-loop of {\sc Invcomp} outputs a much larger $\succ_{\alex}$-basis than the corresponding Janet basis. In doing so, the cardinality of the $\succ_{\alex}$-basis for Cyclic6 is more than three times higher than the cardinality of the Janet basis and the maximal degree of the former is twice higher than that of the latter.

The next two figures illustrate an experimental comparison of the memory used and the time taken by the algorithms {\sc InvComp} and {\sc InvComp} for the Janet division (Fig.~1) and the $\succ_{\alex}$-division (Fig.~2).

\begin{figure}[h]
\begin{center}
{\includegraphics[scale=0.22]{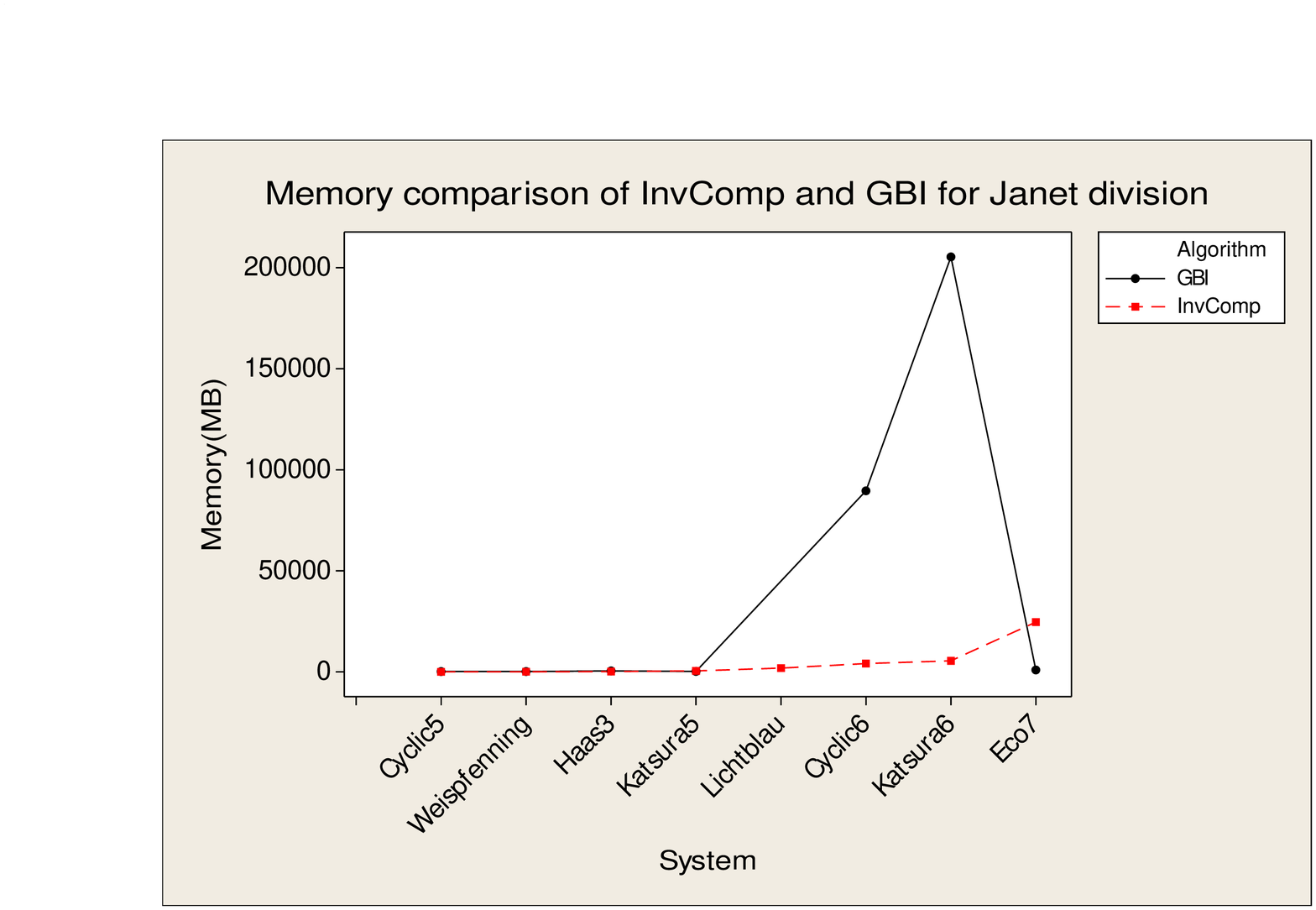}}
{\includegraphics[scale=0.22]{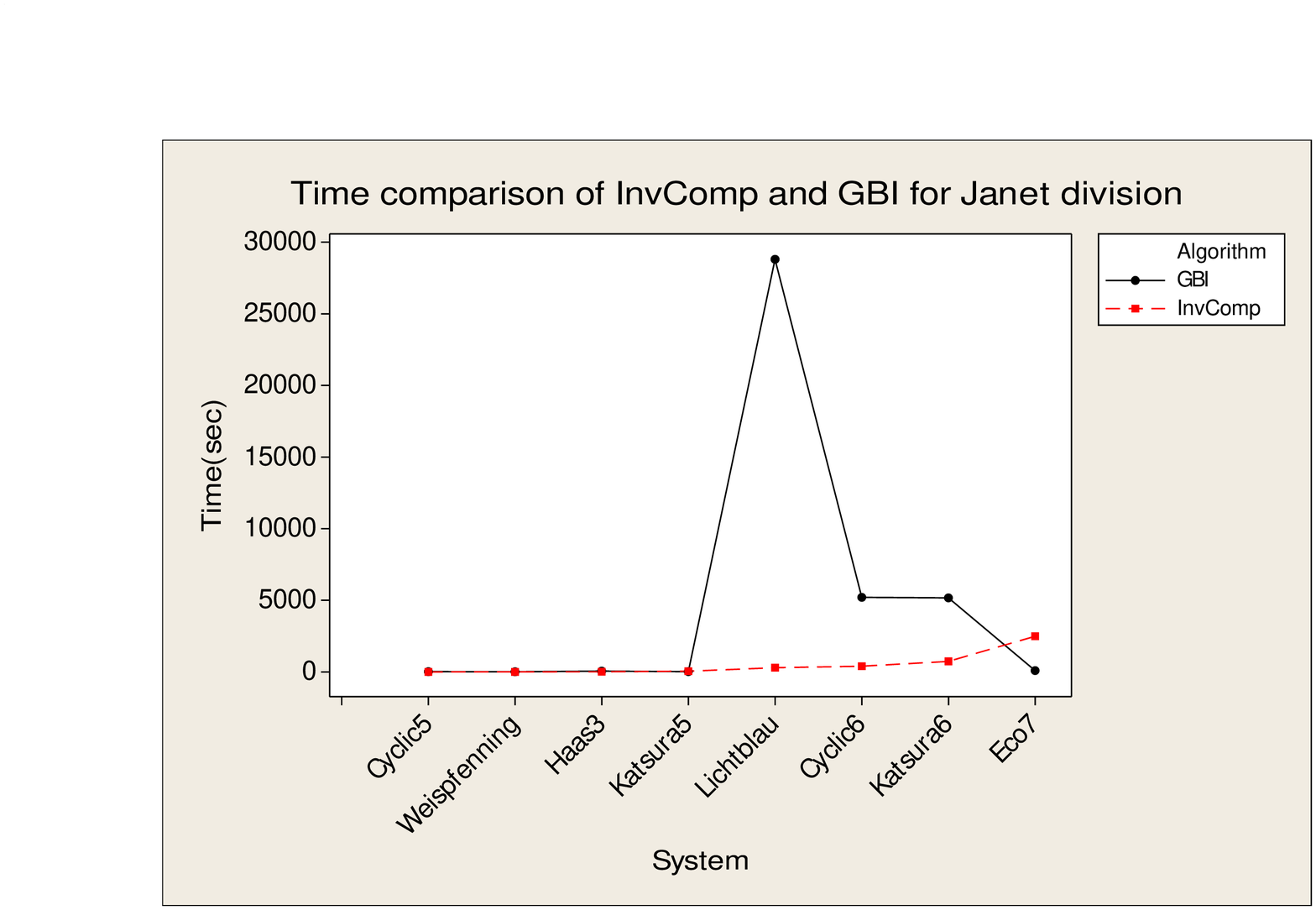}}
\caption{Comparison of {\sc InvComp} and {\sc GBI} for Janet division}
\end{center}
\end{figure}

\begin{figure}[ht]
\begin{center}
{\includegraphics[scale=0.22]{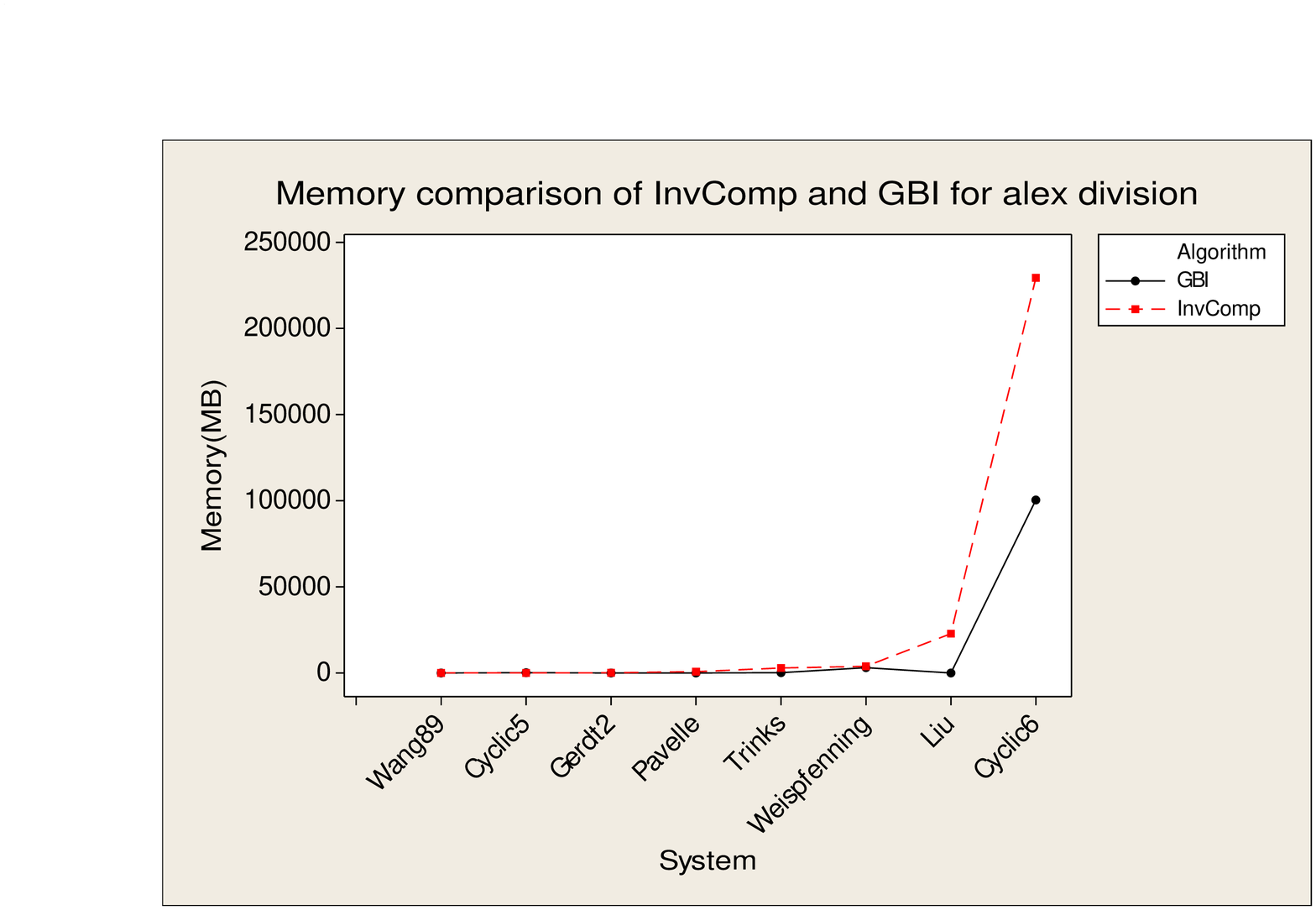}}
{\includegraphics[scale=0.22]{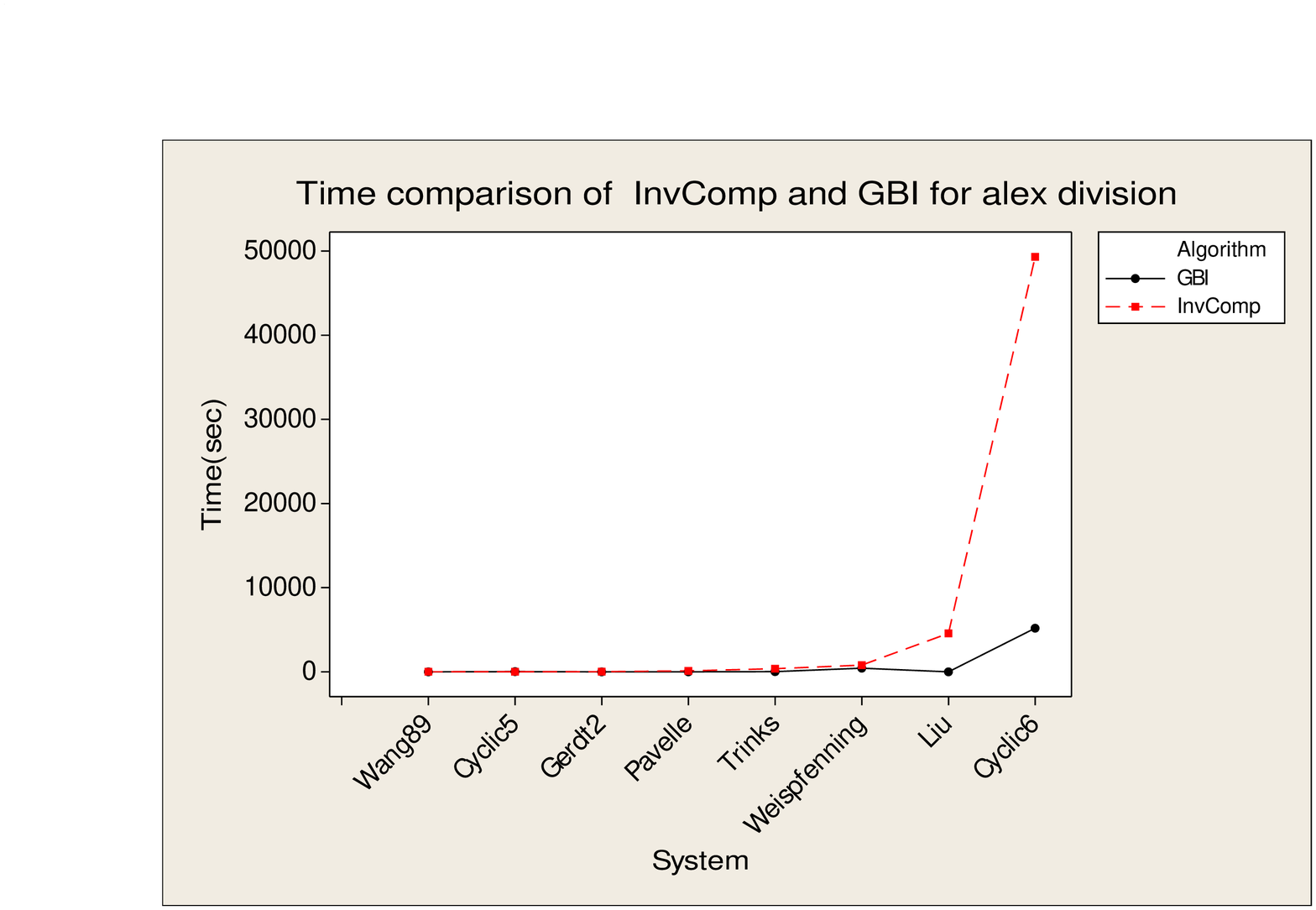}}
\caption{Comparison of {\sc InvComp} and {\sc GBI} for $\succ_{\alex}$-division}
\label{graph}
\end{center}
\end{figure}

It should be noted that the above presented experimental analysis of our new involutive completion algorithm {\sc InvComp} is underdrawn. One needs to implement it efficiently either in {\tt Maple} and compare with implementation of the GBI algorithm done in {\cite{blinkov} or in C/C++ and compare with the {\tt GINV} software~\cite{ginv}. In the last case the choice of heuristically good selection strategy~\cite{GB07} for a nonmultiplicative polongation to be processed (cf. line 5 in algorithm {\sc InvBase}) and the use of proper data structures for the fast search of an involutive divisor \cite{gerdtnew} play a key role for the efficiency of involutive bases computation. As it was demonstrated by Faug{\`e}re \cite{F4,F5}, another very important source for computational efficiency of \Gr bases algorithms is a clever use of linear algebra for performing reductions. We believe that this is applies equally to the involutive algorithms. Experimental comparison of {\tt GINV} and another GBI implementation ({\tt JB}) with  {\tt Magma}, {\tt Singular} and accessible implementations of signature-based algorithms which do not exploit linear algebra is given on the Web page~{\tt http://cag.jinr.ru/wiki/Benchmarking\_for\_polynomial\_ideals}.

\section*{Acknowledgements.}  The authors thank the anonymous referees for constructive comments and recommendations which helped to improve the readability and quality of the paper. The authors also thank Daniel Robertz for helpful remarks. The first author (V.P.G.) was initially motivated in incorporation of the F$_5$ criterion into involutive algorithms during his stay at the Laboratory of Computer Science of University Pierre and Marie Curie in May 2011. He is grateful to Jean-Charles Faug{\`e}re for supporting that visit and for stimulating discussions. The research presented in the given paper was performed during the stay of the second author (A.H.) at Joint Institute for Nuclear Research in Dubna, Russia. The contribution of the first author (V.P.G.) was partially supported by the grants 12-07-00294 and 13-01-00668 from the Russian Foundation for Basic Research and by the grant 3802.2012.2 from the Ministry of Education and Science of the Russian Federation.

\bibliographystyle{alpha}

\end{document}